\newcommand{\GIG}{\ensuremath{\mathrm{GIG}}}
\newcommand{\HD}{\ensuremath{\mathrm{HD}}}
\newcommand{\ED}{\ensuremath{\mathrm{ED}}}
\newcommand{\ST}{\ensuremath{\mathrm{ST}}}
\newcommand{\SC}{\ensuremath{\mathrm{SC}}}
\newcommand{\SN}{\ensuremath{\mathrm{SN}}}
\newcommand{\CSN}{\ensuremath{\mathrm{CSN}}}
\newcommand{\GH}{\ensuremath{\mathrm{GH}}}
\newcommand{\CGH}{\ensuremath{\mathrm{CGH}}}
\newcommand{\CST}{\ensuremath{\mathrm{CST}}}
\newcommand{\CSC}{\ensuremath{\mathrm{CSC}}}
\newcommand{\NIG}{\ensuremath{\mathrm{NIG}}}
\newcommand{\St}{\ensuremath{\mathrm{St}}}
\newcommand{\CNIG}{\ensuremath{\mathrm{CNIG}}}
\newcommand{\CSt}{\ensuremath{\mathrm{CSt}}}
\newcommand{\diag}{\ensuremath{\mathrm{diag}}}
\newtheorem{theorem}{Theorem}[section]
\newtheorem{example}[theorem]{Example}
\newtheorem{corollary}[theorem]{Corollary}
\renewcommand{\vec}[1]{\mathbf{#1}}
\begin{document}
\title{On the Computation of Multivariate Scenario Sets for the Skew-$t$ and Generalized Hyperbolic Families}
\author{Emanuele Giorgi$^{1,2}$, Alexander J. McNeil$^{3,4}$}

\maketitle
\begin{abstract}
\text{ } We examine the problem of computing multivariate scenarios
sets for skewed distributions. Our interest is motivated by the
potential use of such sets in the ‘stress testing’ of insurance
companies and banks whose solvency is dependent on changes in a set of
financial ‘risk factors’. We define multivariate scenario sets based
on the notion of half-space depth (HD) and also introduce the notion
of expectile depth (ED) where half-spaces are defined by expectiles
rather than quantiles. We then use the HD and ED functions to define
convex scenario sets that generalize the concepts of quantile and
expectile to higher dimensions. In the case of elliptical
distributions these sets coincide with the regions encompassed by the
contours of the density function. In the context of multivariate
skewed distributions, the equivalence of depth contours and density
contours does not hold in general. We consider two parametric families
that account for skewness and heavy tails: the generalized hyperbolic
and the skew-$t$ distributions.  By making use of a canonical form
representation, where skewness is completely absorbed by one
component, we show that the HD contours of these distributions are
‘near-elliptical’ and, in the case of the skew-Cauchy distribution, we
prove that the HD contours are exactly elliptical. We propose a
measure of multivariate skewness as a deviation from angular
symmetry and show that it can explain the quality of the elliptical
approximation for the HD contours.  \\
\text{ }\\
{\bf Keywords:} angular symmetry; expectile depth; generalized hyperbolic distribution; half-space depth; multivariate scenario sets; skew-$t$ distribution.
\end{abstract}
\begin{small}
1. Lancaster Medical School, Lancaster University, Lancaster, UK\\
2. Institute of Infection and Global Health, University of Liverpool, Liverpool, UK\\
3. Department of Actuarial Mathematics and Statistics, Heriot-Watt University, Edinburgh, UK\\
4. Maxwell Institute for Mathematical Sciences, Edinburgh, UK
\end{small}

\pagebreak

\section{Introduction}
\label{sec:intro}

While the topic of this paper is of independent computational statistical interest, the original motivation for studying these issues comes from applications in financial risk management.

Let $\vec{X}$ be a random vector representing changes in a set of
so-called financial risk factors, such as equity indexes, interest
rates, foreign exchange rates, etc. These risk factors impact the
value of a financial portfolio and lead to a random loss given by
$L=\ell(\vec{X})$. The portfolio might be a derivatives desk at a bank
or a product book (e.g. annuity book) of an insurer.

We will assume that: (1) we have data that permit the statistical
estimation of a model for $\vec{X}$; (2) the function $\ell$ is known
to us. That is, for any value $\vec{x}$ we are able to compute the
resulting loss $\ell(\vec{x})$. The function $\ell$ contains
information about the size of the positions in the portfolio and
encapsulates the valuation formulas necessary to quantify the effect of changes in the risk factors on the values of the positions.

A first question of possible interest is: how do we construct a scenario set $S$ based on the probability distribution of $\vec{X}$ that includes plausible scenarios?

In our opinion there are advantages to using scenario sets that are
based on the idea of half-space depth rather than sets that are based
on density. In the presence of multivariate skewness, density sets
tend to be dragged towards the shorter tails of the distribution and
exclude too many extreme scenarios in the outer tail. 

An example is
given in Figure~\ref{fig:motivating_data}. We have plotted typical one-year
changes in yield for a 3-year government bond and a 10-year
government bond. These are the kinds of risk factors that would be
considered in quantifying, for example, the change in the value of a
portfolio of government bonds, or a portfolio of annuity liabilities. A bivariate normal inverse Gaussian
(\NIG) distribution has been fitted to the data (see Example \ref{example:motivating_data}) and, on the basis of the
fitted model, density contours have been plotted and lines are shown that
divide the plane into two half-spaces with probabilities $\alpha=0.005$ and
$1-\alpha=0.995$. The set formed by the intersection of closed half-spaces
with probability $1-\alpha$ is denoted $Q_\alpha$. Points on the
boundary $\delta Q_\alpha$ are said to have \textit{depth} $\alpha$ so
that $Q_\alpha$ is the set of points with depth at least $\alpha$. 

\begin{figure}[htbp]
\begin{center}
\includegraphics[scale=0.9]{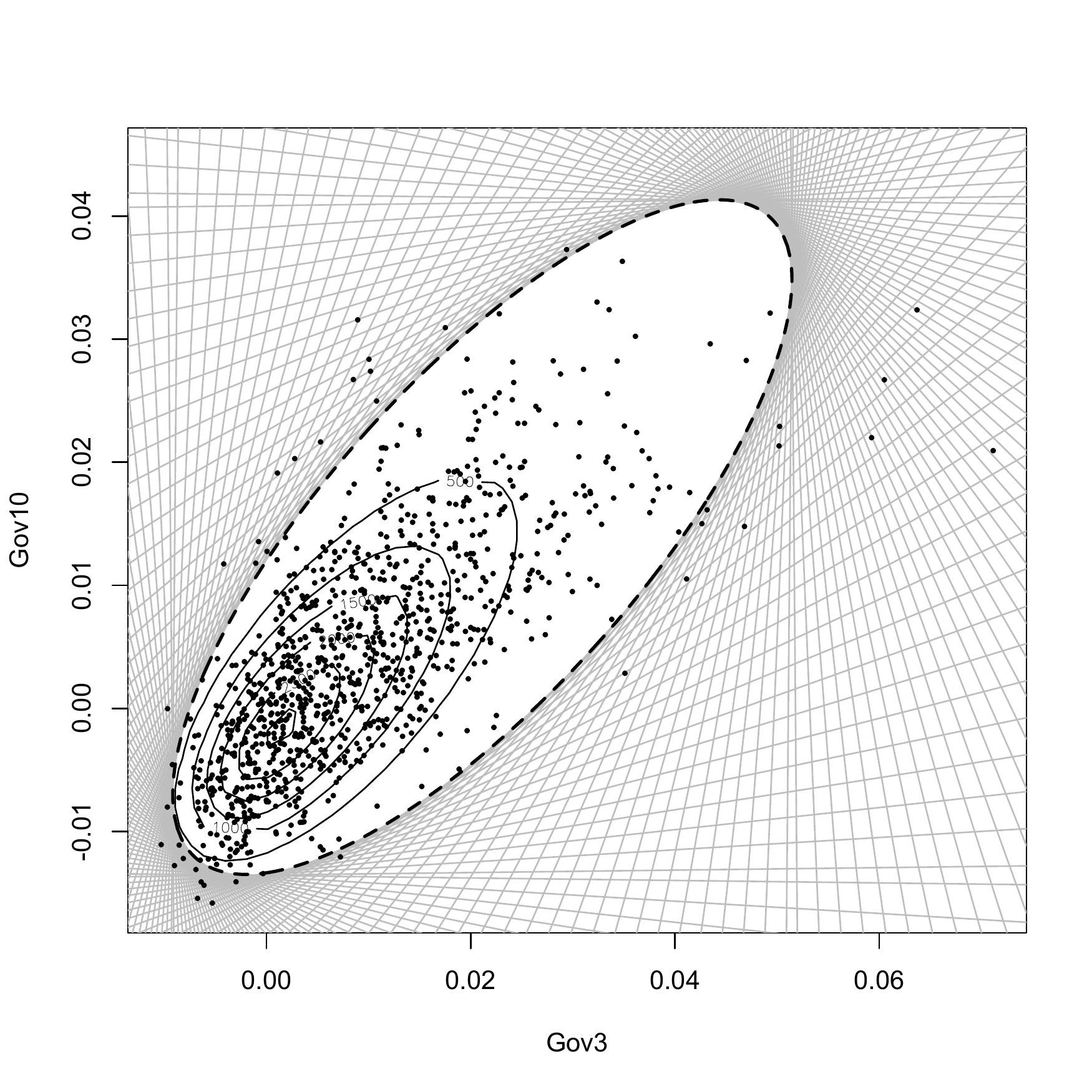}
\caption{Picture shows change in yields for 3-year and 10-year
  government bond over 1 year. A bivariate NIG distribution has been
  fitted and corresponding density contours. The dashed line
  corresponds to the approximating ellipsoid for the $\alpha$-depth set
  $Q_{\alpha}$ for $\alpha=0.005$; the grey lines show the boundaries of half
  spaces with probabilities $\alpha$ and $(1-\alpha)$.\label{fig:motivating_data}}
\end{center}
\end{figure}


If we construct a scenario set $Q_\alpha$ based on depth we want to be able to say
whether $\vec{x} \in Q_\alpha$ for some arbitrary point $\vec{x}$. It is often the case that a regulator or manager asks the risk
modeller to consider a particular extreme scenario and work out how
costly it might be. In order to weigh the importance or plausibility of this scenario
we would like to know its depth $\alpha$, i.e.~the largest value of
$\alpha$ for which $\vec{x} \in Q_\alpha$.

Suppose we are given a whole series of scenarios to consider and for each scenario we calculate the associated loss. Let $R
= \{\vec{x} : \ell(\vec{x}) > \ell_0\}$ for some $\ell_0$ denote a
ruin set, i.e.~a set of scenarios that lead to an unacceptably large
loss. We would like to identify the ruin scenario that is most plausible, in
the sense that it has the maximum depth $\alpha$.
This is known as the reverse stress testing problem.

In this paper we will consider the properties of $Q_\alpha$ and
related scenario sets when the distribution of
$\vec{X}$ is
in either the skew-$t$ (\ST) or
generalised hyperbolic (\GH) family. These are flexible families
of skewed and potentially heavy-tailed distributions that are useful
for modelling financial risk-factor changes. The NIG distribution
fitted to the data in
Figure~\ref{fig:motivating_data} belongs to the latter family and we
see that, in this case, the set $\delta Q_\alpha$ is very
close to (but not exactly) an ellipse. This ``near ellipsoidal''
behaviour is true of other distributions in these families.

The paper is structured as follows. In Section \ref{sec:hd_ssets}, we
define multivariate scenario sets based on half-space depth (\HD)
and briefly describe some important properties of the half-space
median that are relevant for the purposes of this paper. In Section
\ref{sec:exp_depth} we introduce the related notion of expectile depth (\ED)
which generalizes the univariate concept of expectile to higher dimensions. 

In Section \ref{sec:depth_skewedd} we examine the problem of computing
depth sets based on \HD \ and \ED \ for the \ST \ (Section
\ref{subsec:skewt}) and \GH \ (Section \ref{subsec:gh}) families of
distributions. We show that the computation of depth sets can be simplified
by making use of a canonical form representation, where skewness is
completely absorbed by one component. We also prove that the \HD \
contours for the skew-Cauchy (\SC) distribution are elliptical, giving
a further simplification in the computation. 

We provide an algorithm for the construction of approximating
ellipsoids to depth sets based on \HD \ and investigate the quality of
such an approximation. In Section \ref{subsec:relation_rs_eds} we
examine the relationship between ellipsoidal depth sets and angular
symmetry. We propose a multivariate measure of skewness as a deviation
from angular symmetry and show that this measure can explain the
quality of the ellipsoidal approximation. Numerical results, in
Section \ref{subsec:prob_misclass}, show its concordance with the
probability of misclassification. Although the probability of
misclassification is a direct and more interpretable measure of the
error we incur when using ellipsoidal approximations, it is much more
difficult to compute.  We conclude with a discussion in Section \ref{sec:discussion}. 

\section{Half-Space Depth}
\label{sec:hd_ssets}

\subsection{Half-Space Depth and its Relation to Quantiles}
\label{subsec:hd_quantiles}
Let $(\Omega, \mathscr{F}, P)$ be a probability space and $\vec{X} : \Omega \rightarrow \mathbb{R}^d$ be a given random vector. For any $\vec{y} \in \mathbb{R}^d$ and any directional vector $\vec{u} \in \mathbb{R}^{d} \setminus \{0\}$, let 
$$H_{\vec{y},\vec{u}} = \{\vec{x} \in \mathbb{R}^d : \vec{u}^\top\vec{\vec{x}} \leq \vec{u}^\top\vec{y}\}$$
denote the closed half-space bounded by the hyperplane through $\vec{y}$. We write the probability that $\vec{X}$ lies in the half-space $H_{\vec{y}, \vec{u}}$ as
$$P_{\vec{X}}(H_{\vec{y}, \vec{u}}) = P(\vec{u}^\top\vec{X} \leq \vec{u}^\top\vec{y}).$$
The half-space depth of the point $\vec{x} \in \mathbb{R}^d$  with
respect to the probability distribution of $\vec{X}$ is given by
\begin{equation}
\label{eq:hd}
\HD_{\vec{X}}(\vec{x}) = \inf_{\|u\| = 1}P_{\vec{X}}(H_{\vec{x},\vec{u}}),
\end{equation}
where $\|\cdot\|$ is the Euclidean norm.  We note that \HD \ is an
affine invariant measure, meaning that if $A\in \mathbb{R}^{d \times
  d}$ is a non-singular matrix and $\vec{b}\in\mathbb{R}^d$ is a
vector, then $\HD_{A\vec{X}+\vec{b}}(A\vec{x}+\vec{b}) = \HD_{\vec{X}}(\vec{x})$.\par 

Let $\alpha \in (0,0.5]$ be a probability value. The main definition
of a scenario set that we use is
\begin{eqnarray}
\label{eq:depth_set}
Q_{\alpha} = \bigcap \{H_{\vec{y},\vec{u}} : P_{\vec{X}}(H_{\vec{y},\vec{u}}) \geq 1-\alpha\}
\end{eqnarray}
which is the intersection of all closed half-spaces with probability
at least $(1-\alpha)$. Sets of this kind are considered by many authors
including~\cite{bib:masse-theodorescu-94},~\cite{bib:rousseeuw-ruts-99}
and~\cite{bib:mcneil-smith-12}. The construction is sometimes
referred to as half-space trimming.\par

For $\theta \in (0,1)$ we also define a $\theta$-quantile function on $\mathbb{R}^d \setminus \{0\}$ by writing $q_{\theta}(\vec{u})$ for the $\theta$-quantile of $\vec{u}^\top\vec{X}$. Then, \eqref{eq:depth_set} can be expressed in terms of $q_{\theta}(\vec{u})$ by
$$
Q_{\alpha} = \left\{\vec{x} : \vec{u}^\top\vec{x} \leq q_{1-\alpha}(\vec{u}), \forall \vec{u}\right\}.
$$
\par

We will make the assumption that $\vec{X}$ has a
  strictly positive probability density on $\mathbb{R}^d$. 
This
  assumption is satisfied by the distributions that interest us in
  this paper and allows us to pass easily between the concepts of
  quantiles and depth. Under this assumption, we have
\begin{displaymath}
   \left\{\vec{x} : \vec{u}^\top\vec{x} \leq
     q_{1-\alpha}(\vec{u}) \right\} =
  \left\{\vec{x} : P_{\vec{X}}(H_{\vec{x},\vec{u}}) \leq 1-\alpha\right\} 
 \end{displaymath}
from which it can be easily deduced that
\begin{equation}
  \label{eq:Qalpha-depth}
  Q_\alpha = \left\{\vec{x} : \HD_{\vec{X}}(\vec{x}) \geq \alpha \right\}.
\end{equation}
We thus refer to $Q_\alpha$ as a depth set and we refer to $\partial
Q_{\alpha}$, the boundary of
$Q_{\alpha}$, as the $\alpha$ depth contour; the contour
consists of the points with depth exactly equal to $\alpha$. 

In Figure~\ref{fig:motivating_data} we show an example of the half-space trimming
construction for $\alpha=0.005$ (the mesh of grey straight lines) as
well as the depth contour $\partial Q_{0.005}$ (the dashed curve).

\subsection{The half-space median and angular symmetry}
\label{subsec:hd_median}
The function $\HD_{\vec{X}}(\vec{x})$ can be used to define an affine
equivariant median known as the half-space median (or Tukey median) of
$\vec{X}$. This is the set of maximal $\HD$ given by
\begin{equation}
\label{eq:hd_median}
\boldsymbol{\beta}_{\vec{X}} = \arg \max_{\vec{x} \in \mathbb{R}^d} \HD_{\vec{X}}(\vec{x}).
\end{equation}
By affine equivariant we mean that, if $A\in \mathbb{R}^{d \times d}$
is a non-singular matrix and $\vec{b} \in \mathbb{R}^d$ is a vector,
then
$\boldsymbol{\beta}_{A\vec{X}+\vec{b}} = A
\boldsymbol{\beta}_{\vec{X}} + \vec{b}$.\par
The half-space median is, in general, not unique unless $\vec{X}$ is symmetric
according to some notion of multivariate symmetry; see
\citet*{serfling2006} for a survey of multivariate concepts of
symmetry. More general conditions for uniqueness are given in
\citet*{Small1987} who shows that a sufficient condition for uniqueness
in the bivariate case is the strict positivity of the density
function. The least restrictive definition of multivariate symmetry,
under which the half-space median is unique, is \emph{angular
  symmetry}. The random variable $\vec{X}$ is angularly symmetric about a point
$\boldsymbol{\eta}$ if
\begin{equation}
\label{eq:angular_sym}
\frac{\vec{X}-\boldsymbol{\eta}}{\|\vec{X}-\boldsymbol{\eta}\|} \overset{d}{=} -\frac{\vec{X}-\boldsymbol{\eta}}{\|\vec{X}-\boldsymbol{\eta}\|},
\end{equation}
where ``$\overset{d}{=}$'' indicates equality in
distribution. \citet*{dutta2011} show that $\HD_{\vec{X}}(\boldsymbol{\eta}) =
1/2$ if and only if $\boldsymbol{\eta}$ is the center of angular
symmetry. Thus, if $\boldsymbol{\eta}$ is the centre of angular
symmetry, then $\boldsymbol{\beta}_{\vec{X}}=\boldsymbol{\eta}$. This property is used in Section
\ref{subsec:relation_rs_eds} to define two different measures of multivariate skewness as deviations from angular symmetry. \par
Note that if $\boldsymbol{\beta}_{\vec{X}}$ is the center of angular symmetry,
then $\boldsymbol{\beta}_{\vec{X}}$ also corresponds to the component-wise
median. This is clear, since if $\HD_{\vec{X}}(\boldsymbol{\beta}_{\vec{X}}) =
1/2$ then
$$
P_{\vec{X}}(H_{\boldsymbol{\beta}_{\vec{X}},\vec{u}}) = 1/2,\quad \forall \vec{u} \in \mathbb{R}^d.
$$
If we set $\vec{u} = \vec{e}_{i}$, the $i$th unit vector, we can infer that the $i$th element of $\boldsymbol{\beta}_{\vec{X}}$ is the univariate median of the marginal distribution of the $i$th component of $\vec{X}$.
\section{Expectile Depth}\label{sec:exp_depth}
\subsection{Definitions}

The notion of an expectile was introduced by \citet*{newey1987} as the
solution of an asymmetric least squares regression problem, analagous to
quantile regression. Given an integrable random variable $Y$ in
$\mathbb{R}$ and $\theta \in (0,1)$, the $\theta$-expectile of the distribution function
$F_Y$ of $Y$ is the unique solution $y$ of the
equation
\begin{equation}
  \label{eq:expectile-definition}
 \theta \mathbb{E}((Y-y)^+) =  (1-\theta)\mathbb{E}((Y-y)^-)
\end{equation}
where $x^+ = \max(x,0)$, $x^-=\max(-x,0)$ and
$\mathbb{E}(\cdot)$ is the expectation with respect to the
distribution of $Y$. 

It was shown by~\cite{jones1994} that the $\theta$-expectile of $F_Y$ can be
expressed as the $\theta$-quantile of the related distribution function
\begin{displaymath}
  \tilde{F}_Y(y) = \frac{yF_Y(y)-\mu(y)}{2(yF_Y(y)-\mu(y))+\mathbb{E}(Y) -y}
\end{displaymath}
where $\mu(y):=\int_{-\infty}^y x d F_Y(x)$
is the lower partial moment of $F_Y$. For any random variable $Y$ the
distribution function $\tilde{F}_Y(y)$ is continuous and strictly increasing on
its support, implying that the $\theta$-expectile is uniquely defined for all
$\theta$ in $(0,1)$.

In our application, for a fixed random vector $\vec{X}\in
\mathbb{R}^d$, we define the $\theta$-expectile function
$e_{\theta}(\vec{u})$ to be the $\theta$-expectile of the distribution
$\vec{u}^\top\vec{X}$ for $\vec{u} \in \mathbb{R}^d$ and, for $\alpha
\in (0,0.5]$ we 
consider a scenario set of the form
$$
E_{\alpha} = \left\{\vec{x} : \vec{u}^\top\vec{x} \leq e_{1-\alpha}(\vec{u}), \forall \vec{u}\right\}.
$$
Let 
$$
\ED_{\vec{X}}(\vec{x}) = \inf_{\|\vec{u}\|=1}
\tilde{P}_{\vec{X}}(H_{\vec{x},\vec{u}})
$$
denote the smallest probability of a half-space
$H_{\vec{x},\vec{u}}$ when probabilities are calculated according to $\tilde{P}_{\vec{X}}(H_{\vec{x},\vec{u}}) = \tilde{F}_{\vec{u}^\top\vec{X}}(\vec{u}^\top\vec{x})$. We refer to
$\ED_{\vec{X}}(\vec{x})$ as the expectile depth of $\vec{x}$ with respect to the
distribution of $\vec{X}$ and note that it is also an affine invariant
measure. The
scenario set may also be expressed as
$$
E_{\alpha} = \left\{\vec{x} : \ED_{\vec{X}}(\vec{x}) \geq \alpha\right\}.
$$

With obvious notation, we use $\partial E_{\alpha}$ to indicate the
boundary of $E_{\alpha}$, and refer to this as the $\alpha$-expectile
depth contour.

\subsection{Properties of expectile depth}
There are both practical and theoretical reasons for considering
expectile depth as an alternative to standard (quantile) depth.
On the one hand the expectile can simply be viewed as a kind of
generalized quantile;
see~\citet{bib:bellini-klar-mueller-gianin-13}. Using the techniques
of \citet{bib:mcneil-smith-12}
it is straightforward
to show that,
 when $\vec{X}$ has an
elliptical distribution, the set
$E_\alpha$ is an ellipsoidal set like $Q_\alpha$, with axis lengths in
identical proportions. However, for general distributions expectile
depth sets will have different shapes to (quantile) depth sets. 

At a more theoretical level, both the quantile function $q_\theta(\vec{u})$ and the expectile
function $e_\theta(\vec{u})$ are positive-homogeneous functions on
$\mathbb{R}^d$, meaning that they are functions $r: \mathbb{R}^d \to
\mathbb{R}$ satisfying $r(k\bm{u}) = k r(\bm{u})$ for $k >0$.

A fundamental result in convex analysis can be used to show that a
positive-homogeneous function $r$ has a representation as 
the so-called support function
 \begin{align}\label{eq:stress-test-representation}
 r(\bm{u}) &= \sup \{\bm{u}^\top \bm{x} : \bm{x} \in S \} \,
 \end{align}
of the convex set
\begin{equation}\label{eq:1}
S = \{ \mathbf{x} \in \mathbb{R}^d \colon   \bm{v}^\top \mathbf{x} \leq  r(\bm{v})  \text{ for all } \bm{v}\in \mathbb{R}^d \}\,
\end{equation}
if and only if the function $r$ is also subadditive on
$\mathbb{R}^d$; see~\citet{bib:rockafellar-70}
or~\citet{bib:mcneil-smith-12} for technical details and recall that a
subadditive function satisfies $r(\bm{u}_1+\bm{u}_2) \leq r(\bm{u}_1)
+ r(\bm{u}_2)$ for all $\bm{u}_1$ and $\bm{u}_2$ in $\mathbb{R}^d$.

The set $S$ in~\eqref{eq:1} is $Q_\alpha$ when $r =
q_{1-\alpha}$ and it is $E_\alpha$ when $r =
e_{1-\alpha}$. However, the quantile function $q_\theta$ is not subadditive in
general, but only for certain underlying random vectors
$\vec{X}$ and certain values of $\theta$. For example, for elliptically distributed random
vectors $q_\theta$ is subadditive for $\theta>0.5$. But when
$\vec{X}=(X_1,X_2)$ is a vector comprising two independent standard
exponential random variables, \citet{bib:mcneil-smith-12} show that
$q_\theta$ is not subadditive for $\theta =0.72$. In contrast,
the expectile function $e_\theta$ is subadditive for any random vector
$\vec{X}$ with finite mean and $\theta>0.5$.

Let $\alpha$ satisfy $0 < \alpha <0.5$ and assume that $E_\alpha$ and
$Q_\alpha$ are non-empty. The implication
of~\eqref{eq:stress-test-representation} is that for any $\bm{u}\in
\mathbb{R}^d$ there exists a scenario $\bm{x}(\bm{u}) \in \partial E_\alpha$, i.e.~a scenario on the
boundary of the expectile depth set,
such that $e_{1-\alpha}(\bm{u}) = \bm{u}^{\top}
\bm{x}(\bm{u})$. However, there may exist values $\bm{u}\in \mathbb{R}^d$
such that $q_{1-\alpha}(\bm{u}) > \bm{u}^{\top}
\bm{x}$ for all $\bm{x} \in Q_\alpha$. In this sense $\partial E_\alpha$ is 
more satisfactory as a multivariate analogue of the expectile than is
$\partial Q_\alpha$ as a multivariate analogue of the quantile.

We note that scenario sets of the form~\eqref{eq:1} can be based on
other positive-homogeneous and subadditive
functions. In~\citet{bib:mcneil-smith-12} a scenario set based on the
function $es_\theta(\bm{u}) = E(\bm{u}^\top\vec{X} \mid
\bm{u}^\top\vec{X} \geq q_\theta(\bm{u}))$ is proposed; this is
related to the so-called expected shortfall risk measure.


\section{Depth Sets for Skewed Distributions}
\label{sec:depth_skewedd}

We now consider two families of multivariate skewed distributions, both
of which have a canonical form, obtained by an affine transformation,
 in which all of the skewness is
absorbed by one of the marginal distributions. In view of the affine
invariance of \HD \ and \ED, it suffices to be able to calculate these
quantities for random vectors in their canonical form.

\subsection{Skew-$t$ distribution}
\label{subsec:skewt}
The skew-$t$ (\ST) distribution is a flexible model for
skewed and heavy-tailed multivariate data \citep*{azzalini2003,
  azzalini_genton2008}. Let $\boldsymbol{\xi}, \boldsymbol{\gamma} \in
\mathbb{R}^{d}$ and $\nu \in \mathbb{R}^{+}$ denote the parameters of
location, skewness and the degrees of freedom; let
$\Omega\in\mathbb{R}^{d \times d}$ be a symmetric, positive-definite
dispersion matrix. A random variable $\vec{X}$ in $\mathbb{R}^{d}$ is distributed
according to a $\ST_{d}(\boldsymbol{\xi}, \Omega, \boldsymbol{\gamma},
\nu)$ distribution if it has density function
\begin{equation*}
f_{\ST_{d}}(\vec{x}) = 2t_{d}(\vec{x};\nu)T_{1}\left(\boldsymbol{\gamma}^{\top}\omega^{-1}(\vec{x}-\boldsymbol{\xi})
             \left(\frac{\nu+d}{Q_{\vec{x}}+\nu}\right)^{1/2};\nu+d\right)\text{, }\vec{x} \in \mathbb{R}^{d},
\end{equation*} 
where 
\begin{eqnarray*}
 t_{d}(\vec{x};\nu) = \frac{\Gamma((\nu+d)/2)}{|\Omega|^{1/2}(\pi\nu)^{d/2}\Gamma(\nu/2)}(1+Q_{\vec{x}}/\nu)^{(\nu+d)/2}\text{, }Q_{\vec{x}} = (\vec{x}-\boldsymbol{\xi})^{\top}\Omega^{-1}(\vec{x}-\boldsymbol{\xi}),
\end{eqnarray*}
\begin{equation*}
 \omega = \diag (\omega_{1},\ldots,\omega_{d}) = \diag (\omega_{11},\ldots,\omega_{dd})^{1/2}
\end{equation*}
and $T_{1}(\cdot;\nu)$ denotes the univariate Student $t$
distribution function with $\nu$ degrees of freedom. For later use, we
also define the following correlation matrix
$$
\overline{\Omega} = \omega^{-1}\Omega\omega^{-1}.
$$

Skewness and tails heaviness are regulated by the parameters
$\boldsymbol{\gamma}$ and $\nu$, respectively; these two parameters
jointly characterize the shape of the distribution. If
$\boldsymbol{\gamma}=0$ the Student $t$ distribution is recovered; if $\nu \rightarrow \infty$ we obtain the skew-normal (\SN) distribution \citep*{azzalini1985,azzalini1999};  if $\nu \rightarrow \infty$ and $\boldsymbol{\gamma}=0$ we obtain the multivariate normal distribution. \par
The next result, which follows from the linear transformation result
given in Appendix~\ref{subsec:lf_st}, introduces the canonical form of
the \ST \ distribution (\CST). The importance of the canonical form in
summarizing important features related to the location, skewness and kurtosis of the \ST \ family is discussed in an unpublished paper by \citet*{capitanio2012}. 

\begin{theorem}
\label{theorem:canon_st}
Let $\vec{X} \sim \ST_{d}(\boldsymbol{\xi},\Omega,
\boldsymbol{\gamma}, \nu)$ where  $\Omega = BB^{\top} $ and $B\in
\mathbb{R}^{d\times d}$. Let $\gamma_{*} =
(\boldsymbol{\gamma}^{\top}\overline{\Omega}\boldsymbol{\gamma})^{1/2}$ and
define $\vec{X}^* = P^{\top}B^{-1}(\vec{X}-\boldsymbol{\xi})$, where
$P$ is an orthonormal matrix with first column equal to
$\gamma_{*}^{-1}B^\top\omega^{-1}\boldsymbol{\gamma}$. Then $\vec{X}^* \sim \ST_{d}(\vec{0},I_{d},\gamma_{*}\vec{e}_{1},\nu)$.
\end{theorem}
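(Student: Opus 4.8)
The plan is to read the canonical form off the affine transformation property of the \ST\ family recorded in Appendix~\ref{subsec:lf_st}. That result states that for a nonsingular $A \in \mathbb{R}^{d\times d}$ and $\vec{b}\in\mathbb{R}^d$, $\vec{Y}=A\vec{X}+\vec{b}$ is again skew-$t$ with the same degrees of freedom, with location $A\boldsymbol{\xi}+\vec{b}$, dispersion $A\Omega A^{\top}$, and skewness parameter determined by requiring that the linear form inside the argument of $T_1$ be preserved under the substitution $\vec{x}=A^{-1}(\vec{y}-\vec{b})$; writing $\omega_Y=\diag(A\Omega A^{\top})^{1/2}$, this yields $\boldsymbol{\gamma}_Y=\omega_Y(A^{\top})^{-1}\omega^{-1}\boldsymbol{\gamma}$. (The quadratic form $Q_{\vec{x}}$ is affine invariant and the Jacobian $|A|^{-1}$ cancels against $|A\Omega A^{\top}|^{1/2}=|A|\,|\Omega|^{1/2}$, so these pieces take care of themselves.)

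Applying this with $A=P^{\top}B^{-1}$ and $\vec{b}=-P^{\top}B^{-1}\boldsymbol{\xi}$, so that $\vec{X}^{*}=A\vec{X}+\vec{b}$, the location becomes $A\boldsymbol{\xi}+\vec{b}=\vec{0}$, and the dispersion becomes $A\Omega A^{\top}=P^{\top}B^{-1}(BB^{\top})(B^{-1})^{\top}P=P^{\top}P=I_d$ since $\Omega=BB^{\top}$ and $P$ is orthonormal. In particular $\omega_Y=\diag(I_d)^{1/2}=I_d$, which is exactly what makes the skewness transformation reduce to an orthogonal change of coordinates.

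It then remains to evaluate $\boldsymbol{\gamma}_Y$. Since $\omega_Y=I_d$ and $(A^{\top})^{-1}=\big((B^{-1})^{\top}P\big)^{-1}=P^{\top}B^{\top}$, we obtain $\boldsymbol{\gamma}_Y=P^{\top}\vec{v}$ with $\vec{v}:=B^{\top}\omega^{-1}\boldsymbol{\gamma}$. Two observations finish the proof. First, $\|\vec{v}\|^{2}=\boldsymbol{\gamma}^{\top}\omega^{-1}BB^{\top}\omega^{-1}\boldsymbol{\gamma}=\boldsymbol{\gamma}^{\top}\overline{\Omega}\,\boldsymbol{\gamma}=\gamma_{*}^{2}$, so $\gamma_{*}^{-1}\vec{v}$ is a unit vector and the prescription ``$P$ has first column $\gamma_{*}^{-1}\vec{v}$'' is consistent (when $\gamma_{*}=0$ we have $\boldsymbol{\gamma}=\vec{0}$ by positive definiteness of $\overline{\Omega}$ and the statement is the trivial Student $t$ case). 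Second, writing $\vec{p}_1,\dots,\vec{p}_d$ for the columns of $P$ we have $\vec{v}=\gamma_{*}\vec{p}_1$, so the $i$-th entry of $\boldsymbol{\gamma}_Y=P^{\top}\vec{v}$ is $\vec{p}_i^{\top}\vec{v}=\gamma_{*}\,\vec{p}_i^{\top}\vec{p}_1$, which equals $\gamma_{*}$ for $i=1$ and $0$ for $i\ge 2$ by orthonormality. Hence $\boldsymbol{\gamma}_Y=\gamma_{*}\vec{e}_1$ and $\vec{X}^{*}\sim\ST_d(\vec{0},I_d,\gamma_{*}\vec{e}_1,\nu)$.

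There is no substantive obstacle here: the whole argument is a specialization of the appendix lemma, and the only thing requiring attention is the bookkeeping of the diagonal scaling matrices $\omega$ and $\omega_Y$ in the skewness transformation — the statement comes out clean precisely because the chosen $A$ standardizes the dispersion to $I_d$, killing $\omega_Y$, and $P$ is built to rotate the vector $B^{\top}\omega^{-1}\boldsymbol{\gamma}$ onto the first axis.
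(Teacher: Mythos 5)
Your proposal is correct and follows exactly the route the paper intends: the paper offers no separate proof of Theorem \ref{theorem:canon_st}, stating only that it follows from the linear-transformation result of Appendix \ref{subsec:lf_st}, and your argument is precisely that specialization (your formula $\boldsymbol{\gamma}_Y=\omega_Y(A^{\top})^{-1}\omega^{-1}\boldsymbol{\gamma}$ agrees with the appendix expression for nonsingular square $A$, since then $C\Omega_{\vec{Y}}^{-1}C^{\top}=\overline{\Omega}$ and the denominator collapses to $1$). The bookkeeping of $\omega_Y=I_d$, the norm computation $\|B^{\top}\omega^{-1}\boldsymbol{\gamma}\|=\gamma_*$, and the orthonormality step yielding $\gamma_*\vec{e}_1$ are all correct.
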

If $\vec{X}$ is in canonical form, we simply write $\vec{X} \sim
\CST_{d}(\gamma,\nu)$, where $\gamma$ is the scalar parameter of
skewness; if $\nu=1$, the case of the skew-Cauchy (\SC) distribution,
then $\vec{X} \sim \CSC_{d}(\gamma)$; if $\nu=\infty$, then $\vec{X}
\sim \CSN_{d}(\gamma)$. \par
Without loss of generality we have assumed that for a random vector $\vec{X}$ in canonical form all of the asymmetry
is absorbed in the first component and we have $X_i \overset{d}{=}
-X_i$ for $i\neq 1$.
We now show that the \HD \ and \ED \ contours of the \CST \
distribution are symmetric with respect to the axis of the unique
asymmetric component. This property is useful in the construction of scenario sets based on either \HD \ or \ED \ of any \ST \ distribution, as shown later in some examples.

\begin{theorem}
\label{theorem:symmhd}
Let $\vec{x}$ and $\vec{x}'$ denote two points in $\mathbb{R}^{d}$ whose first elements are equal and the others differ in sign at most. If $\vec{X} \sim \CST_{d}(\gamma,\nu)$, then $\HD_{\vec{X}}(\vec{x}) = \HD_{\vec{X}}(\vec{x}')$ and $\ED_{\vec{X}}(\vec{x}) = \ED_{\vec{X}}(\vec{x}')$.
\end{theorem}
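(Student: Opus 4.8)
The plan is to reduce the statement to a single distributional symmetry of the canonical form together with the affine invariance of $\HD$ and $\ED$. Given $\vec{x}$ and $\vec{x}'$ as in the statement, let $D = \diag(d_1,\dots,d_d)$ be the signature matrix with $d_1 = 1$ and, for $i \geq 2$, $d_i = 1$ if $x_i' = x_i$ and $d_i = -1$ if $x_i' = -x_i$ (the choice of $d_i$ being irrelevant when $x_i = 0$). Then $\vec{x}' = D\vec{x}$, and $D$ is orthogonal with $D = D^{-1}$ and $|\det D| = 1$. The theorem will follow at once if I can show that $D\vec{X} \overset{d}{=} \vec{X}$ whenever $\vec{X} \sim \CST_d(\gamma,\nu)$.

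First I would verify this distributional invariance directly from the density. For $\vec{X} \sim \CST_d(\gamma,\nu) = \ST_d(\vec{0}, I_d, \gamma\vec{e}_1, \nu)$ the density of Section~\ref{subsec:skewt} specialises (with $\boldsymbol{\xi} = \vec{0}$ and $\Omega = \omega = \overline{\Omega} = I_d$, so that $Q_{\vec{x}} = \|\vec{x}\|^2$ and $\boldsymbol{\gamma}^\top\omega^{-1}\vec{x} = \gamma x_1$) to
$$f_{\CST_d}(\vec{x}) = 2\, t_d(\vec{x};\nu)\, T_1\!\left(\gamma x_1\left(\frac{\nu+d}{\|\vec{x}\|^2 + \nu}\right)^{1/2}; \nu+d\right),$$
and $t_d(\vec{x};\nu)$ depends on $\vec{x}$ only through $Q_{\vec{x}} = \|\vec{x}\|^2$. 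Hence $f_{\CST_d}$ is a function of $x_1$ and $\|\vec{x}\|^2$ alone; in particular it is invariant under any orthogonal map acting on the coordinates $x_2,\dots,x_d$ and fixing $x_1$, and a fortiori $f_{\CST_d}(D\vec{x}) = f_{\CST_d}(\vec{x})$. Since $|\det D| = 1$, this gives $D\vec{X} \overset{d}{=} \vec{X}$. I would stress here that this joint invariance is genuinely stronger than the marginal relations $X_i \overset{d}{=} -X_i$, $i \neq 1$, noted before the theorem, and it is the joint version — simultaneous sign changes of an arbitrary subset of the last $d-1$ coordinates — that is needed.

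It then remains to chain two elementary observations. Because the half-space depth depends on $\vec{X}$ only through its law, $D\vec{X} \overset{d}{=} \vec{X}$ yields $\HD_{D\vec{X}} \equiv \HD_{\vec{X}}$ as functions on $\mathbb{R}^d$; evaluating at $D\vec{x}$ and invoking the affine invariance of $\HD$ with the nonsingular matrix $D$ and zero shift,
$$\HD_{\vec{X}}(\vec{x}') = \HD_{\vec{X}}(D\vec{x}) = \HD_{D\vec{X}}(D\vec{x}) = \HD_{\vec{X}}(\vec{x}).$$
The identical argument applies verbatim to $\ED$: the auxiliary distribution function $\tilde F_{\vec{u}^\top\vec{X}}$ entering the definition of expectile depth is a functional of the law of $\vec{u}^\top\vec{X}$, so $\ED_{D\vec{X}} \equiv \ED_{\vec{X}}$, and since $\ED$ is affine invariant, $\ED_{\vec{X}}(\vec{x}') = \ED_{D\vec{X}}(D\vec{x}) = \ED_{\vec{X}}(\vec{x})$.

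I do not anticipate a serious obstacle here: the only point requiring care is that the symmetry be used in its joint form, which the density computation supplies immediately; everything else is bookkeeping with affine invariance. If one preferred to avoid the density altogether, the same invariance $D\vec{X} \overset{d}{=} \vec{X}$ also falls out of the stochastic representation of the skew-$t$ law combined with Theorem~\ref{theorem:canon_st}, but the density route is the most self-contained given what has been developed so far.
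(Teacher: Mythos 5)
Your proposal is correct and rests on exactly the same key fact as the paper's own proof: the invariance of the canonical skew-$t$ law under simultaneous sign changes of the coordinates $2,\dots,d$ (the paper's auxiliary vector $\vec{X}'$ is precisely your $D\vec{X}$), followed by transferring this symmetry to $\HD$ and $\ED$. The only differences are presentational — you invoke the previously stated affine invariance with $A=D$ instead of re-deriving it through the bijection $\vec{u}\mapsto\vec{u}'$ on the unit sphere, and you explicitly verify the joint distributional symmetry from the density, a step the paper asserts without proof — so this is a slightly more careful write-up of the same argument.
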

\begin{proof}
For a given normalized vector $\vec{u} \in \mathbb{R}^d$, consider the half-space $H_{\vec{x}, \vec{u}}$. \par
Let $J \subseteq \{2,\ldots,d\}$ be a set of indices indicating the
elements of $\vec{x}'$ that have opposite sign with respect to the
corresponding elements of $\vec{x}$. Define the random vector
$\vec{X}'$ so that $X_i' = -X_i$ if $i\in J$ and $X_i' = X_i$ if $i \in
J^C$, where $J^c$ is the complement of $J$; similarly define $\vec{u}'$ so that  $u_i' = -u_i$ if $i\in J$ and $u_i' = u_i$ if $i \in
J^C$. Since $\vec{X}' \overset{d}{=} \vec{X}$ it follows that
\begin{displaymath}
  P(\vec{u}^\top\vec{X} \leq \vec{u}^\top\vec{x}) =
  P(\vec{u}^\top\vec{X}' \leq \vec{u}^\top\vec{x}) =  P(\vec{u}'^\top\vec{X} \leq \vec{u}'^\top\vec{x}') 
\end{displaymath}
and hence that
\begin{equation*}
P_{\vec{X}}(H_{\vec{x},\vec{u}}) = P_{\vec{X}}(H_{\vec{x}',\vec{u}'})\text{ and } \tilde{P}_{\vec{X}}(H_{\vec{x},\vec{u}}) = \tilde{P}_{\vec{X}}(H_{\vec{x}',\vec{u}'}).
\end{equation*}
We obtain $\HD_{\vec{X}} (\vec{x})=\HD_{\vec{X}} (\vec{x}')$ and
$\ED_{\vec{X}} (\vec{x}) = \ED_{\vec{X}} (\vec{x}')$ when we take the infimum over all $||\vec{u}||=1$.
\end{proof}
In the special case of the \SC \ distribution, the computation of \HD
\ contours is further simplified. As shown in the next theorem, the
\HD \ contours of the \CSC \ distribution are circular, and hence
ellipsoidal for the general \SC \ distribution; we use $\sec(\cdot)$ and $\tan(\cdot)$ to indicate the secant and tangent functions, respectively.
\begin{theorem}
\label{theorem:sc_contours}
If $\vec{X} \sim \CSC_{d}(\gamma)$, then
\begin{eqnarray}
\label{eq:sc_hd_contour}
Q_{\alpha} = \left\{\vec{x}: (x_{1} - s(\alpha))^2 + \sum_{i=2}^dx_{1}^2 \leq t(\alpha)^2 \right\}
\end{eqnarray}
where
\begin{equation*}
s(\alpha) = \frac{\gamma}{\sqrt{1+\gamma^{2}}}\sec\left\{\left(\frac{1}{2}-\alpha\right)\pi\right\}\text{ and }t(\alpha) = \tan\left\{\left(\frac{1}{2}-\alpha\right)\pi\right\}, \alpha \in (0,0.5].
\end{equation*}
\end{theorem}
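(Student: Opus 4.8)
The plan is to work from the quantile representation $Q_\alpha=\{\vec{x}:\vec{u}^\top\vec{x}\le q_{1-\alpha}(\vec{u})\ \forall\vec{u}\}$ of Section~\ref{subsec:hd_quantiles}. Since $q_{1-\alpha}$ is positive-homogeneous one may restrict to unit vectors, and since a closed ball is exactly the intersection of its supporting half-spaces, $\bar B(\vec{c},\rho)=\{\vec{x}:\vec{u}^\top\vec{x}\le\vec{u}^\top\vec{c}+\rho\ \text{for all }\|\vec{u}\|=1\}$, the assertion \eqref{eq:sc_hd_contour} is equivalent to the single identity
\[
q_{1-\alpha}(\vec{u})=s(\alpha)\,u_1+t(\alpha),\qquad u_1:=\vec{e}_1^\top\vec{u},\ \ \|\vec{u}\|=1 ,
\]
and this is what I would establish. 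Conveniently this also explains why $Q_\alpha$ ought to be a ball centred on the $x_1$-axis: only the first coordinate of $\vec{u}$ enters $q_{1-\alpha}$.

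First I would pin down the law of the projection $\vec{u}^\top\vec{X}$. By the skew-$t$ linear-transformation result underlying Theorem~\ref{theorem:canon_st} (Appendix~\ref{subsec:lf_st}), for $\|\vec{u}\|=1$ the variable $\vec{u}^\top\vec{X}$ is univariate skew-Cauchy, $\vec{u}^\top\vec{X}\sim\SC_1(0,1,\gamma_{\vec u})$, with skewness parameter satisfying $\gamma_{\vec u}(1+\gamma_{\vec u}^2)^{-1/2}=u_1\,\gamma(1+\gamma^2)^{-1/2}$; in particular its law, hence $q_{1-\alpha}(\vec u)$, depends on $\vec u$ only through $u_1$. (The same conclusion follows from the representation $\vec{X}=W^{-1/2}\vec{Z}$ with $W\sim\chi^2_1$ independent of $\vec{Z}\sim\CSN_d(\gamma)$, on which $\vec{u}^\top\vec{Z}$ is a univariate skew-normal; Theorem~\ref{theorem:symmhd} also serves as a partial check, forcing $Q_\alpha$ to be invariant under sign flips of $x_2,\dots,x_d$.) Because the Student-$t$ distribution function with two degrees of freedom is elementary, the $\SC_1(0,1,\beta)$ density reduces to $\frac{1}{\pi(1+x^2)}\bigl(1+\delta x(1-\delta^2+x^2)^{-1/2}\bigr)$ with $\delta=\beta(1+\beta^2)^{-1/2}$, and one elementary integration gives the closed form
\[
F(x)=\tfrac12+\tfrac1\pi\arctan x-\tfrac1\pi\arctan\!\Bigl(\frac{\delta}{\sqrt{1-\delta^2+x^2}}\Bigr).
\]

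It then remains to check that, with $F$ carrying the parameter $p:=u_1\gamma(1+\gamma^2)^{-1/2}$, one has $F\bigl(s(\alpha)u_1+t(\alpha)\bigr)=1-\alpha$. Setting $\theta=(\tfrac12-\alpha)\pi\in[0,\tfrac\pi2)$, so that $s(\alpha)u_1+t(\alpha)=(p+\sin\theta)/\cos\theta$, the computational core is the observation that the radicand $1-p^2+\bigl((p+\sin\theta)/\cos\theta\bigr)^2$ collapses to the perfect square $(1+p\sin\theta)^2/\cos^2\theta$; the second arctangent's argument therefore becomes $p\cos\theta/(1+p\sin\theta)$, and the arctangent addition formula yields $\arctan\frac{p+\sin\theta}{\cos\theta}-\arctan\frac{p\cos\theta}{1+p\sin\theta}=\arctan(\tan\theta)=\theta$, whence $F(\cdot)=\tfrac12+\tfrac\theta\pi=1-\alpha$. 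This is the crux of the argument, and the place where the skew-Cauchy contour turns out to be an exact circle: for $\nu\neq 1$ the analogous expression does not simplify, which is why the contours are then only ``near-elliptical''. Everything else—rescaling back to arbitrary $\vec u$ by homogeneity and recognising the resulting intersection of half-spaces as the ball $\{\vec{x}:\|\vec{x}-s(\alpha)\vec{e}_1\|\le t(\alpha)\}$—is routine, with the degenerate case $\alpha=1/2$ ($\theta=0$, radius $t(1/2)=0$) giving the Tukey median $s(1/2)\vec{e}_1=\gamma(1+\gamma^2)^{-1/2}\vec{e}_1$.
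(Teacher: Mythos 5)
Your proposal is correct and follows the same skeleton as the paper's proof: reduce the claim to the identity $q_{1-\alpha}(\vec{u})=s(\alpha)u_1+t(\alpha)$ for unit vectors $\vec{u}$, and then recognise the resulting intersection of half-spaces as the ball $\{\vec{x}:\|\vec{x}-s(\alpha)\vec{e}_1\|\le t(\alpha)\}$. The one substantive difference is where that identity comes from: the paper simply cites the closed-form quantile function of the univariate \SC\ distribution from Behboodian et al.\ (2006) and stops there, whereas you derive it from scratch --- first showing via the linear forms in Appendix~\ref{subsec:lf_st} that $\vec{u}^\top\vec{X}\sim\SC_1(0,1,\gamma_*)$ with $\delta_*=\gamma_*(1+\gamma_*^2)^{-1/2}=u_1\gamma(1+\gamma^2)^{-1/2}$, then integrating the density to get $F(x)=\tfrac12+\tfrac1\pi\arctan x-\tfrac1\pi\arctan\bigl(\delta(1-\delta^2+x^2)^{-1/2}\bigr)$, and finally verifying $F\bigl(s(\alpha)u_1+t(\alpha)\bigr)=1-\alpha$ through the perfect-square collapse of the radicand to $(1+p\sin\theta)^2/\cos^2\theta$ and the arctangent subtraction formula. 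I checked these computations and they are right (in particular $\delta_*$ does reduce to $u_1\gamma(1+\gamma^2)^{-1/2}$ when $\gamma_*$ is given by \eqref{eq:gammastar}, and the difference quotient $(A-B)/(1+AB)$ does simplify to $\tan\theta$). What your version buys is a self-contained argument that also makes visible \emph{why} the exact ellipticity is special to $\nu=1$: the simplification of the radicand fails for other degrees of freedom. What it costs is length; the paper's citation shortcut is legitimate since the univariate \SC\ quantile function is a known result.
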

\begin{proof}
From the expression of the univariate quantile function of the \SC \ distribution \citep*{behboodian2006}, for any directional vector $\vec{u} \in \mathbb{R}^d$, we can write
\begin{eqnarray*}
q_{1-\alpha}(\vec{u}) = u_{1}s(\alpha) + t(\alpha),
\end{eqnarray*}
where $u_{1}$ is the first element of $\vec{u}$. It follows that
\begin{eqnarray*}
Q_{\alpha} &=& \left\{\vec{x} : \vec{u}^\top \vec{x} \leq u_{1}s(\alpha) + t(\alpha), \forall \vec{u}\right\} \\
                 &=& \left\{\vec{x} : u_{1}\frac{(x_{1}-s(\alpha))}{t(\alpha)}+\sum_{i=2}^du_{i}\frac{x_{i}}{t(\alpha)} \leq 1 , \forall \vec{u}\right\}.
\end{eqnarray*}
By observing that the Euclidean unit ball $\{\vec{y}: \vec{y}^\top\vec{y} \leq 1\}$ can be written as $\{\vec{y} : \vec{u}^\top \vec{y} \leq 1, \forall \vec{u}\}$, we conclude that for $\vec{x} \in Q_{\alpha}$, the vectors $\vec{y} = \left[\vec{x}-(s(\alpha),0,\ldots,0)^\top\right]/t(\alpha)$ describe the unit ball and therefore 
\begin{eqnarray*}
Q_{\alpha} = \left\{\vec{x}: (x_{1}-s(\alpha))^2+\sum_{i=2}^d x_{i}^2 \leq t(\alpha)^2\right\}.
\end{eqnarray*}
\end{proof}

\begin{figure}[htbp]
\begin{center}
\includegraphics[scale=0.98]{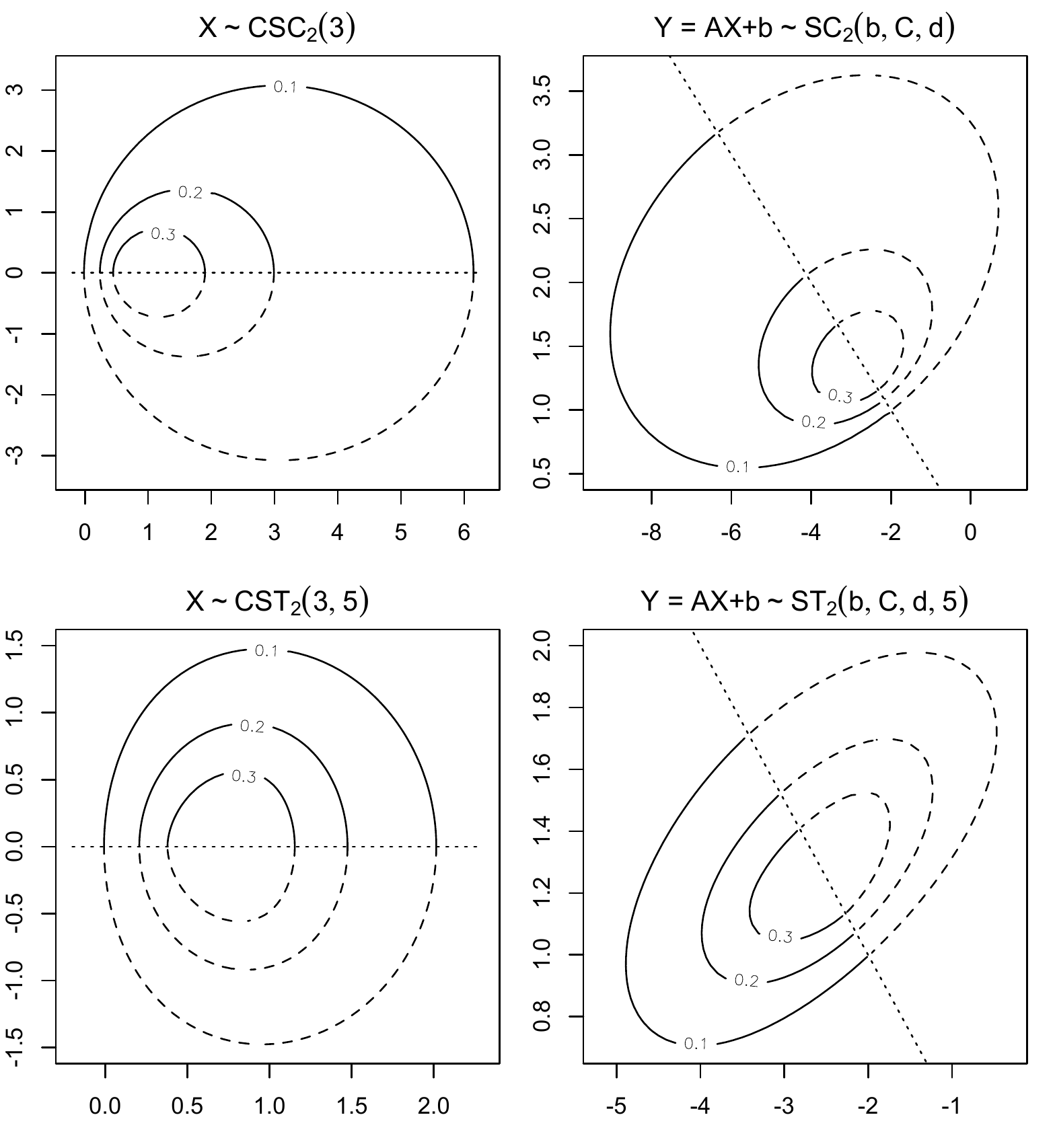}
\caption{Half-space depth contours $\partial Q_{0.1}$, $\partial Q_{0.2}$ and $\partial Q_{0.3}$ in the case of the two bivariate variables $\vec{X}$ (left panels) and $\vec{Y}$ (right panels) as defined in Example \ref{example:st}, with $\nu=1$ (top panels) and $\nu=5$ (lower panels).\label{fig:exampleHD_ST}}
\end{center}
\end{figure}

\begin{figure}[htbp]
\begin{center}
\includegraphics[scale=0.8]{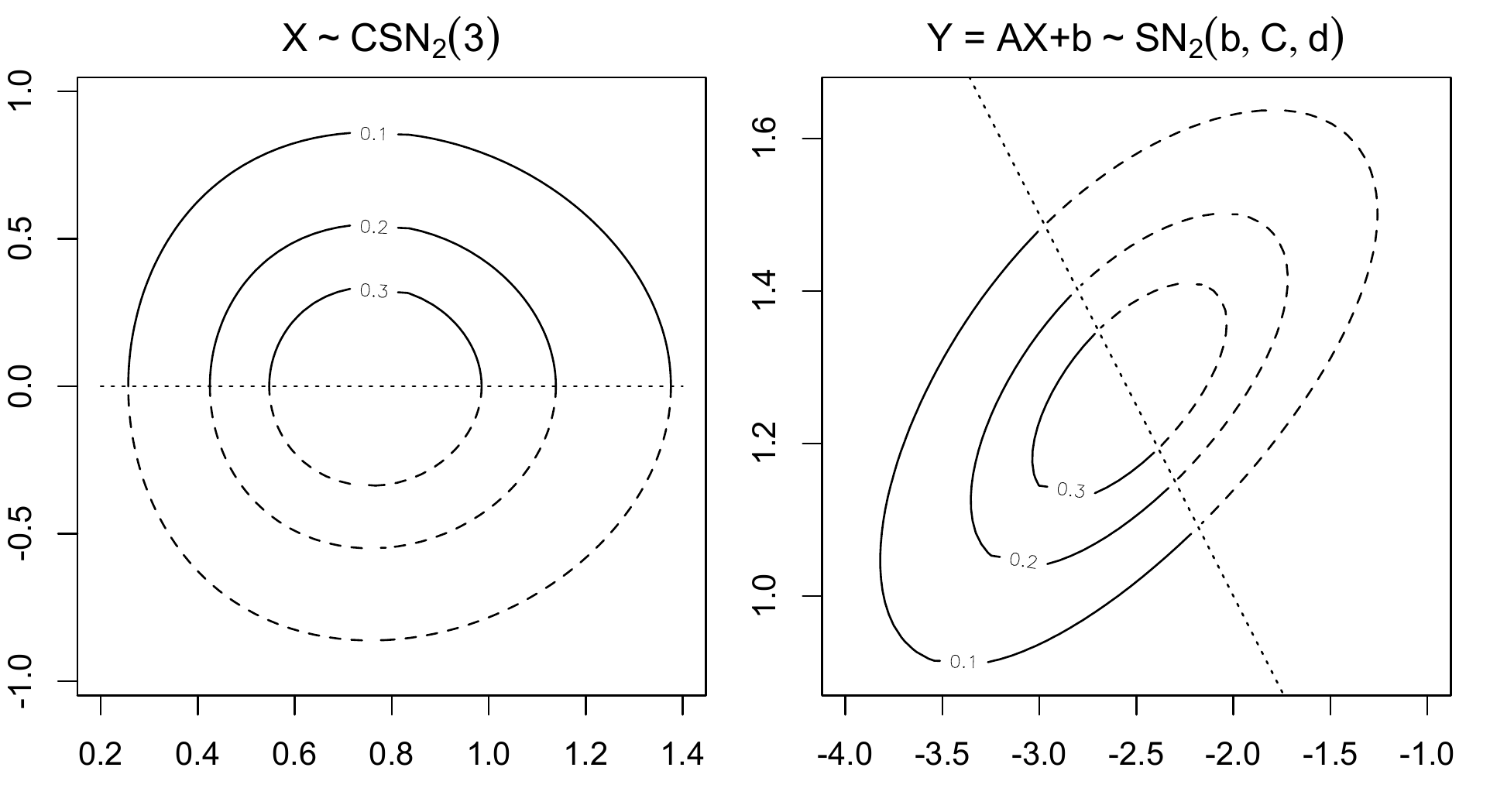}
\caption{Expectile depth contours $\partial E_{0.1}$, $\partial E_{0.2}$ and $\partial E_{0.3}$ in the case of the two bivariate variables $\vec{X}$ (left panel) and $\vec{Y}$ (right panel) as defined in Example \ref{example:sn}. \label{fig:exampleED_SN}}
\end{center}
\end{figure}

We now give some examples to illustrate the depth contours and
expectile depth contours of certain special cases of the ST
distribution. While the skew-Cauchy has elliptical depth contours,
other cases have contours that are near-elliptical; the quality of an
elliptical approximation will be investigated further in
Section~\ref{subsec:prob_misclass}. Algorithm~\ref{algorithm:HDforST}
is used to calculate half-space depth and a similar approach can be
used for expectile depth, as indicated in Example~\ref{example:sn}.
\begin{example}
\label{example:st}
Let $\vec{X} \sim \CST_{2}(3,\nu)$ and define
$$
A = \frac{\sqrt{2}}{2}\left(\begin{matrix}
-1 & -2 \\
1/2 & -1/2
\end{matrix}\right)
$$
and $\vec{b}^\top = (-2, 1)$. The linear transformation $\vec{Y} = A\vec{X}+\vec{b}$ has distribution  $\ST_{2}(\vec{b},C,\vec{d},\nu)$, where
$$
C = \left(\begin{matrix}
5/2 & 1/4 \\
1/4 & 1/4
\end{matrix}\right)
$$
and $\vec{d}^\top \approx (-2.336, 2.828)$. Figure \ref{fig:exampleHD_ST} shows the construction of $\partial Q_{\alpha}$, for $\alpha\in \{0.1, 0.2, 0.3\}$, for the random variables $\vec{X}$ and $\vec{Y}$, letting $\nu$ vary over the values $\{1,5\}$. 
An efficient computation of the depth contours for $\vec{Y}$ is given by applying an affine transformation to the depth contours computed for $\vec{X}$, where skewness is completely absorbed by the first component. Note that from Theorem \ref{theorem:symmhd}, we only need to compute half of the depth contour and obtain the other half by symmetry. In the case of $\nu = 1$ the computation is further simplified by the circular shape of the depth contours of the canonical form.
\end{example}

\begin{example}
\label{example:sn}
Let $\phi(\cdot;a)$ and $\Phi(\cdot;a)$ be the density and distribution functions of a univariate \SN \ distribution with skewness parameter $a$, respectively. If $\vec{X} \sim \CSN_{2}(\gamma)$ and $y = \vec{u}^\top\vec{x}$, then 
\begin{eqnarray*}
\tilde{P}_{\vec{X}}(H_{\vec{u}, \vec{x}}) = \frac{p(y)}{2p(y)+\delta\sqrt{2/\pi}-y}
\end{eqnarray*}
where
\begin{eqnarray*}
p(y) &=& 2\sqrt{\frac{1+\gamma^2}{2\pi}}\Phi\left(\delta y; 0\right)-
\phi\left(\delta y;\tilde{\delta}\right)-y\Phi\left(y; \tilde{\delta}\right), \text{ with }\\
\delta &=& \frac{\gamma}{\sqrt{1+\gamma^2}}\text{ and }\tilde{\delta} = \frac{u_{1}\gamma}{\sqrt{1+\gamma^2(1-u_{1}^2)}}, u_{1} \in (-1,1).
\end{eqnarray*}
Figure \ref{fig:exampleED_SN} shows the expectile depth contours for the random variables $\vec{X}$ and $\vec{Y} = A\vec{X}+\vec{b}$ for $\gamma=3$, where $A$ and $\vec{b}$ are defined in Example \ref{example:st}. 
\end{example}

 \begin{algorithm}
\caption{Computation of \HD \ for the \ST \ distribution \label{algorithm:HDforST}}
Let $\vec{X} \sim \ST_{d}(\boldsymbol{\xi},\Omega,\boldsymbol{\gamma},\nu)$ and $\vec{x} \in \mathbb{R}^d$:
\begin{enumerate}
\item compute $A = P^\top B^{-1}$ where $P$ and $B$ are given in Theorem \ref{theorem:canon_st};
\item compute $\gamma = (\boldsymbol{\gamma}^{\top}\overline{\Omega}\boldsymbol{\gamma})^{1/2}$;
\item transform $\vec{x}$ in $\vec{x}^* = A(\vec{x}-\boldsymbol{\xi})$;
\item Use numerical optimization to minimize 
\begin{eqnarray*}
\min\{F_{Y}(\vec{u}^\top\vec{x}^*), 1-F_{Y}(\vec{u}^\top\vec{x}^*)\}
\end{eqnarray*}
with respect to the directional vector $\vec{u}$, where $Y \sim \CST_{1}(\gamma_*,\nu)$ and $\gamma_*$ given by \eqref{eq:gammastar}.
\end{enumerate}
\end{algorithm}

\subsection{Generalized hyperbolic distribution}
\label{subsec:gh}
A class of multivariate skewed distributions that has received a lot
of attention in the financial literature is the class of generalized
hyperbolic (\GH) distributions;
see~\citet*{mcneil2005} and \citet{bib:eberlein-10}. Let $\boldsymbol{\mu}$,
$\boldsymbol{\kappa} \in \mathbb{R}^d$ denote the parameters of
location and skewness, let $\Sigma \in \mathbf{R}^{d \times d}$ be
a symmetric, positive-definite dispersion matrix and l et
$\lambda \in \mathbb{R}$, $\chi$, $\psi \in \mathbb{R}^+$ be scalars.
$\vec{X}$ has a generalized hyperbolic distribution, written $\vec{X}
\sim \GH_{d}(\boldsymbol{\mu}, \Sigma, \boldsymbol{\kappa},\lambda,
\chi, \psi)$, if it has density 
\begin{displaymath}
f_{\GH_{d}}(\vec{x}) = c \: \frac{K_{\lambda -d/2}\left(\sqrt{(\chi+Q_{\vec{x}})(\psi+\boldsymbol{\kappa}^\top\Sigma^{-1}\boldsymbol{\kappa})}\exp\left\{
\left(\vec{x}-\boldsymbol{\mu}\right)^\top
\Sigma^{-1}
\boldsymbol{\kappa}\right\}\right)}{((\chi+Q_{\vec{x}})(\psi+\boldsymbol{\kappa}^\top\Sigma^{-1}\boldsymbol{\kappa}))^{(d/2-\lambda)/2}},\;\; \vec{x} \in \mathbb{R}^d, 
\end{displaymath}
where
\begin{displaymath}
c=\frac{(\chi \psi)^{-\lambda/2} \psi^\lambda (\psi + \boldsymbol{\kappa}^\top\Sigma^{-1}\boldsymbol{\kappa})^{d/2-\lambda}}{(2\pi)^{d/2}|\Sigma|^{1/2}K_{\lambda}(\sqrt{\chi\psi})}\text{ and } Q_{\vec{x}} = (\vec{x}-\boldsymbol{\mu})^\top\Sigma^{-1}(\vec{x}-\boldsymbol{\mu}),
\end{displaymath}
and where $K_{\lambda}(\cdot)$ denotes the modified Bessel function of
third kind. This class of distributions can be stochastically
represented as mean-variance mixtures of normal distributions using
the representation
\begin{equation}
\label{eq:gh}
\vec{X} \overset{d}{=} \boldsymbol{\mu}+W\boldsymbol{\kappa} + \sqrt{W}A\vec{Z},
\end{equation}
where
\begin{itemize}
\item [$(i)$] $\vec{Z} \sim N_{d}(\vec{0},I_{d})$;
\item [$(ii)$] $A$ is a $d\times d$ matrix such that $\Sigma = AA^\top$;
\item [$(iii)$] $W$ has a generalized inverse Gaussian (GIG), denoted
  by $W \sim \GIG(\lambda, \chi, \psi)$ , with density function
  (\ref{eq:gig}) in the Appendix. 
\end{itemize}
Note that $\GH_{d}(\boldsymbol{\mu}, a\Sigma,
a\boldsymbol{\kappa},\lambda, \chi/a, a\psi)$ and
$\GH_{d}(\boldsymbol{\mu}, \Sigma, \boldsymbol{\kappa},\lambda, \chi,
\psi)$ are equal in distribution for $a>0$, which causes an
identifiability problem. This problem can be solved by imposing a
constraint on the model parameters; see the NIG case below. \par
An important feature of the \GH \ distribution is its flexibility. It also contains several special cases and, in particular, we consider the following. 
\begin{itemize}
\item [$(1)$]\emph{Normal-inverse-Gaussian} (\NIG) distribution:
  $\lambda=-1/2$ and $\chi =\psi$ (our choice of identifiability
  constraint).
\item [$(2)$]\emph{Skewed-$t$} (\St) distribution: $\lambda = -\nu/2$, $\chi = \nu$ and $\psi = 0$.
\end{itemize}
Other special cases and a more detailed discussion of the \GH \ family of distributions are given in \citet*{mcneil2005}. \par
We now introduce the canonical form of the \GH \ distribution. The
following result follows from the general result on linear
transformations in Appendix~\ref{app:gener-hyperb-distr}.

\begin{theorem}
\label{theorem:canon_gh}
Let $\vec{X} \sim \GH_{d}(\boldsymbol{\mu}, \Sigma,
\boldsymbol{\kappa},\lambda, \chi, \psi)$ where $\Sigma=BB^{\top}$ for $B\in
\mathbb{R}^{d\times d}$. Let $\kappa_{*} =
(\boldsymbol{\kappa}^{\top}\Sigma^{-1}\boldsymbol{\kappa})^{1/2}$ and
define $\vec{X}^* = P^{\top}B^{-1}(\vec{X}-\boldsymbol{\mu})$ where
$P$ is an orthonormal matrix having the first column equal to
$\kappa_{*}^{-1}B^{-1}\boldsymbol{\kappa}$. Then $\vec{X}^* \sim \GH_{d}(\vec{0}, I_{d}, \kappa_{*}\vec{e}_{1},\lambda, \chi, \psi)$.
\end{theorem}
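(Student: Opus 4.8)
The plan is to deduce the statement from the linear transformation property of the \GH \ family given in Appendix~\ref{app:gener-hyperb-distr}, which asserts that if $\vec{X} \sim \GH_{d}(\boldsymbol{\mu}, \Sigma, \boldsymbol{\kappa}, \lambda, \chi, \psi)$ and $\vec{Y} = C\vec{X} + \vec{c}$ for a non-singular $C \in \mathbb{R}^{d\times d}$ and $\vec{c} \in \mathbb{R}^d$, then $\vec{Y} \sim \GH_{d}(C\boldsymbol{\mu} + \vec{c},\, C\Sigma C^{\top},\, C\boldsymbol{\kappa},\, \lambda, \chi, \psi)$. First I would record that the construction of $\vec{X}^{*}$ is well posed: since $\boldsymbol{\kappa}^{\top}\Sigma^{-1}\boldsymbol{\kappa} = \kappa_{*}^{2}$ and $B^{-\top}B^{-1} = (BB^{\top})^{-1} = \Sigma^{-1}$, the proposed first column of $P$ satisfies $\|\kappa_{*}^{-1}B^{-1}\boldsymbol{\kappa}\|^{2} = \kappa_{*}^{-2}\boldsymbol{\kappa}^{\top}\Sigma^{-1}\boldsymbol{\kappa} = 1$, so it can indeed be completed to an orthonormal matrix $P$; and $B$ is invertible because $\Sigma$ is positive definite.

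I would then apply the transformation rule with $C = P^{\top}B^{-1}$ and $\vec{c} = -P^{\top}B^{-1}\boldsymbol{\mu}$, so that $\vec{X}^{*} = C\vec{X} + \vec{c}$, and check the transformed parameters one by one. The new location is $C\boldsymbol{\mu} + \vec{c} = \vec{0}$ by construction. The new dispersion matrix is
\[
C\Sigma C^{\top} = P^{\top}B^{-1}(BB^{\top})B^{-\top}P = P^{\top}P = I_{d},
\]
using $\Sigma = BB^{\top}$ and the orthonormality of $P$. The parameters $\lambda$, $\chi$, $\psi$ are unaffected by any affine map, since they belong to the mixing variable $W$ in~\eqref{eq:gh}, so the only remaining task is to compute the new skewness vector $C\boldsymbol{\kappa} = P^{\top}B^{-1}\boldsymbol{\kappa}$.

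This last computation is the one point that genuinely uses the specific choice of $P$. Writing $P = [\,\vec{p}_{1} \mid \cdots \mid \vec{p}_{d}\,]$ with $\vec{p}_{1} = \kappa_{*}^{-1}B^{-1}\boldsymbol{\kappa}$, the $i$th entry of $P^{\top}B^{-1}\boldsymbol{\kappa}$ is $\vec{p}_{i}^{\top}B^{-1}\boldsymbol{\kappa} = \kappa_{*}\,\vec{p}_{i}^{\top}\vec{p}_{1}$, which equals $\kappa_{*}$ for $i = 1$ and $0$ for $i \geq 2$ by orthonormality; hence $C\boldsymbol{\kappa} = \kappa_{*}\vec{e}_{1}$. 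Assembling the four pieces gives $\vec{X}^{*} \sim \GH_{d}(\vec{0}, I_{d}, \kappa_{*}\vec{e}_{1}, \lambda, \chi, \psi)$, as claimed.

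As a sanity check one can bypass the appendix and argue directly from the mean--variance mixture representation~\eqref{eq:gh}: choosing $A = B$ there and applying $P^{\top}B^{-1}$ to $\vec{X} - \boldsymbol{\mu}$ yields $\vec{X}^{*} \overset{d}{=} W\,P^{\top}B^{-1}\boldsymbol{\kappa} + \sqrt{W}\,P^{\top}\vec{Z}$; since $P^{\top}\vec{Z} \sim N_{d}(\vec{0}, I_{d})$ and $P^{\top}B^{-1}\boldsymbol{\kappa} = \kappa_{*}\vec{e}_{1}$, this is exactly the stochastic representation of $\GH_{d}(\vec{0}, I_{d}, \kappa_{*}\vec{e}_{1}, \lambda, \chi, \psi)$. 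I do not expect a real obstacle here: the argument is mechanical once the affine transformation rule is in hand, and the only thing to watch is the bookkeeping that the first column of $P$ is chosen precisely so as to rotate $B^{-1}\boldsymbol{\kappa}$ onto the first coordinate axis.
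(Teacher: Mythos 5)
Your proposal is correct and follows exactly the route the paper intends: the paper gives no separate proof beyond noting that the theorem ``follows from the general result on linear transformations in Appendix~\ref{app:gener-hyperb-distr}'', and your argument is precisely the careful unpacking of that claim (well-posedness of $P$, then parameter-by-parameter verification with $C=P^{\top}B^{-1}$). The additional sanity check via the mixture representation~\eqref{eq:gh} is consistent and correct, but not a genuinely different method.
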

If $\vec{X}$ is in canonical form, then we write $\vec{X} \sim \CGH_{d}(\kappa,\lambda,\chi,\psi)$ where $\kappa$ is the scalar skewness parameter; in the case of the \NIG \ distribution, we write $\vec{X} \sim \CNIG_{d}(\kappa,\psi)$; and in the case of the \St \ distribution, $\vec{X} \sim \CSt_{d}(\kappa,\nu)$. \par
Theorem \ref{theorem:symmhd} in Section \ref{subsec:skewt} can be
easily extended to the \GH \ family using a similar argument that
makes use of the canonical form, thus we omit it. \par

In the following example we calculate half-space depth contours for
special cases of the GH distribution. A similar approach to Algorithm~\ref{algorithm:HDforST}
is used to compute half-space depth at points $\vec{x} \in
\mathbb{R}$. Since the \GH\ family is closed under linear operations,
the probabilities of half spaces are simply computed from the
distribution function of univariate \GH\ distributions. The example
suggests that the depth contours are particularly close to elliptical for
many \GH\ distributions and, in view of this, we also
calculate an approximating
ellipsoid using Algorithm~\ref{algorithm:approx_ellipse_GH}. Expectile
depth is also straightforward to compute, as we illustrate.
\begin{example}
\label{example:hd_ed_gh}
Let $\vec{X} \sim \CSt_{2}(3,\nu)$ and $\vec{Y} \sim \CNIG_{2}(3,\psi)$. Figure \ref{fig:exampleHD_GH} shows some \HD \ contours for $\vec{X}$ (top panels), letting $\nu$ vary over the set $\{3,10\}$, and $Y$ (lower panels), with $\psi \in \{1/10,1\}$. Dotted lines correspond to approximating ellipsoids obtained from Algorithm \ref{algorithm:approx_ellipse_GH}. As shown in Figure \ref{fig:exampleHD_GH} larger values of $\nu$ for $\vec{X}$ and larger values of $\psi$ for $\vec{Y}$ result in a better ellipsoidal approximation of the depth contours (see Section \ref{subsec:relation_rs_eds}). Figure \ref{fig:exampleED_GH} shows \ED \ contours for $\vec{X}$ with $\nu=10$ (left panel) and $\vec{Y}$ with $\psi=1$ (right panel).
\end{example}

\begin{figure}[htbp]
\begin{center}
\includegraphics[scale=0.95]{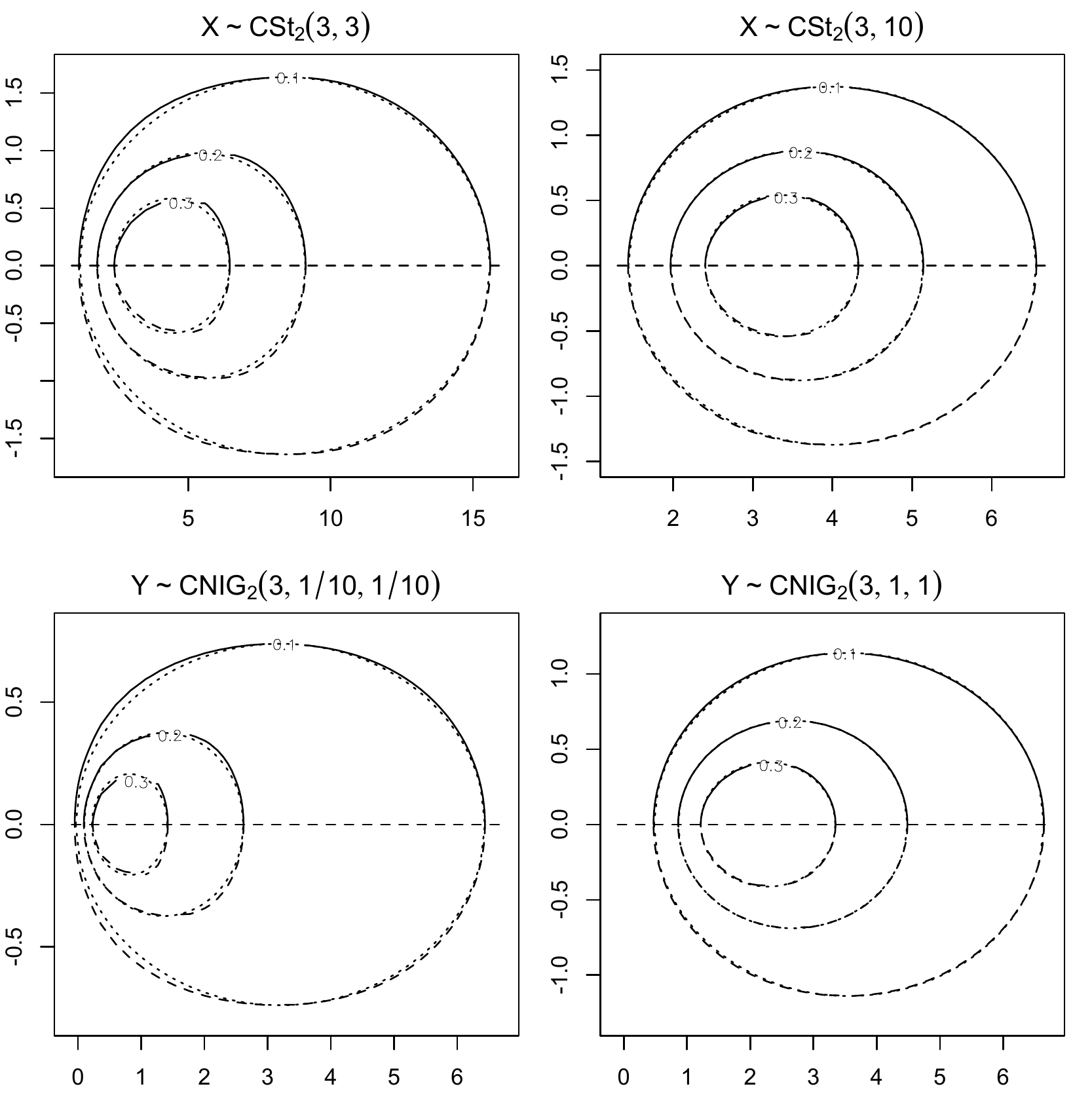}
\caption{Half-space depth contours $\partial Q_{0.1}$, $\partial Q_{0.2}$ and $\partial Q_{0.3}$ in the case of the random variables $\vec{X}$ (top panels), with $\nu = 3$ (left panel) and $\nu = 10$ (right panel), and $\vec{Y}$ (lower panels), as defined in Example \ref{example:hd_ed_gh}; dotted lines correspond to appromximating ellipsoids to each of the depth contours, obtained using Algorithm \ref{algorithm:approx_ellipse_GH}.\label{fig:exampleHD_GH}}
\end{center}
\end{figure}

\begin{figure}[htbp]
\begin{center}
\includegraphics[scale=0.74]{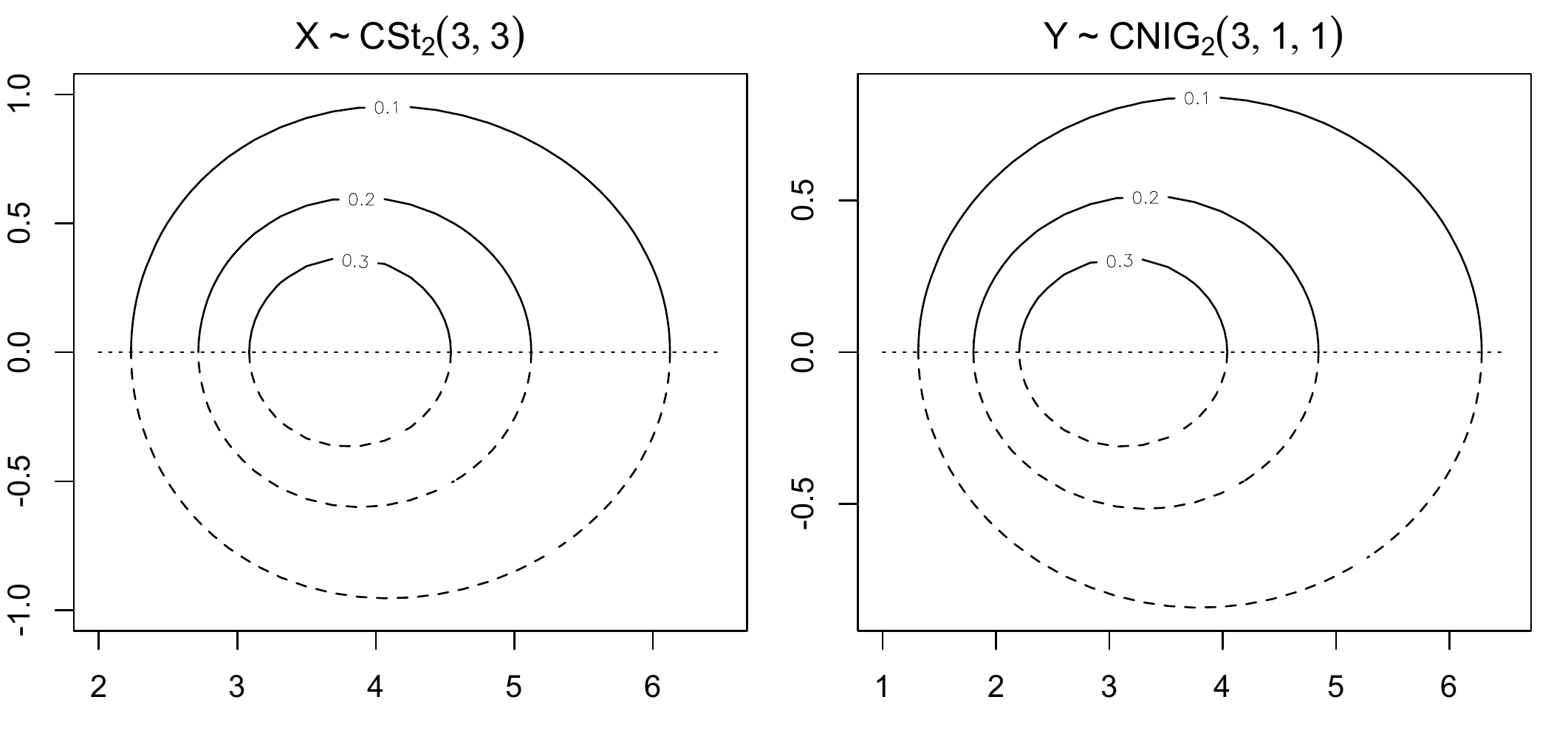}
\caption{Expectile depth contours $\partial E_{0.1}$, $\partial E_{0.2}$ and $\partial E_{0.3}$ for the bivariate variables $\vec{X}$ and $\vec{Y}$ in Example \ref{example:hd_ed_gh}.\label{fig:exampleED_GH}}
\end{center}
\end{figure}

\begin{algorithm}
\caption{Computation of the approximating ellipsoid of $Q_{\alpha}$ for the \GH \ distribution \label{algorithm:approx_ellipse_GH}}
Let $\vec{X} \sim \GH_{d}(\boldsymbol{\mu},\Sigma,\boldsymbol{\kappa},\lambda,\chi,\psi)$:
\begin{enumerate}
\item Compute $\kappa_{*} = (\boldsymbol{\kappa}^{\top}\Sigma^{-1}\boldsymbol{\kappa})^{1/2}$.
\item Compute $A = P^\top B^{-1}$ where $P$ and $B$ are given in Theorem \ref{theorem:canon_gh}.
\item For each component of $\vec{X}^* \sim \CGH_{d}(\kappa_*,\nu)$
  compute the $\alpha$-quantile $a_{i}$, the $(1-\alpha)$-quantile $b_{i}$, and set $c_{i} = (a_{i}+b_{i})/2$ and $d_{i} = |a_{i}-b_{i}|/2$, for $i=1,\ldots,d$.
\item Approximate $Q_{\alpha}$ with the ellipsoid
\begin{eqnarray*}
\tilde{Q}_{\alpha} = \left\{\vec{x} : (\vec{x}-A^{-1}\vec{c}-\boldsymbol{\mu})^\top A^\top D A (\vec{x}-A^{-1}\vec{c}-\boldsymbol{\mu}) \leq 1\right\},
\end{eqnarray*}
where $D = \diag(d_{1}^{-2},\ldots,d_{d}^{-2})$.
\end{enumerate}
\end{algorithm}

\subsection{Relationship between angular symmetry and ellipsoidal depth sets}
\label{subsec:relation_rs_eds}
A natural measure of skewness that quantifies the deviation of a random variable $\vec{X}$ in $\mathbb{R}^d$ from angular symmetry is
\begin{equation}
d_{1}(\vec{X}) = 1/2-\HD_{\vec{X}}(\boldsymbol{\beta}_{\vec{X}}),
\end{equation}
where $\boldsymbol{\beta}_{\vec{X}}$ is the half-space median. The affine equivariance of the median
$\boldsymbol{\beta}_{\vec{X}}$ implies that $d_1(\vec{X})$ is an affine
  invariant measure of skewness. This can be seen by letting $c(\vec{X}) = A\vec{X} + \vec{b}$
  denote the affine transformation that puts $\vec{X}$ into its
  canonical form and observing that
  \begin{eqnarray*}
    d_1(c(\vec{X})) =
  1/2-\HD_{c(\vec{X})}(\boldsymbol{\beta}_{c(\vec{X})})
=  1/2-\HD_{A\vec{X}+\vec{b}}(A \boldsymbol{\beta}_{\vec{X}}+\vec{b})
=  1/2-\HD_{\vec{X}}( \boldsymbol{\beta}_{\vec{X}}) = d_1(\vec{X}).
  \end{eqnarray*}

For the \ST \ and
\GH \ distributions we also define an alternative measure of skewness by
\begin{equation}
d_{2}(\vec{X}) = 1/2-\HD_{c(\vec{X})}(\boldsymbol{\eta}_{c(\vec{X})}),
\end{equation}
where $\boldsymbol \eta_{c(\vec{X})}$ denotes the component-wise median of
$c(\vec{X})$; this measure is simpler to calculate.
Note that
  $d_{2}(\vec{X}) \geq d_{1}(\vec{X})$ since
    $\HD_{c(\vec{X})}(\boldsymbol{\beta}_{c(\vec{X})}) \geq  \HD_{c(\vec{X})}(\boldsymbol{\eta}_{c(\vec{X})})$
and if $\vec{X}$ is angularly symmetric then $d_{1}(\vec{X})=d_{2}(\vec{X})=0$. Alternative measures of multivariate skewness are discussed in a non-parametric context in \citet*{liu1999}. \par
Figure \ref{fig:d2_ST} shows some curves of $d_2$ for different
$\CST_{2}$ distributions against $1/\nu$ and letting $\gamma$ vary
over the set $\{\pm 1, \pm 2, \pm 3, \pm 5, \pm 10, \pm
\infty\}$. Here, we only focused on $\nu \geq 1$ in order to highlight
the different values taken by $d_{2}$ in this interval; however, for
$\nu < 1$, the different curves monotonically increase towards
1/2. Deviation from angular symmetry appears to reach its maximum at
about $0.035$, that corresponds to the case of the independent pair of
variables $(Z_{1}, Z_{2})^\top$, where $Z_{2}$ is a standard normal
variable and $Z_{1}$ is a half-normal variable. However, closeness of
the \ST \ distribution to angular symmetry is indicated for values of
$\nu$ close to 1. Indeed, the following results prove that in the case of
the \SC \ distribution, angular symmetry holds
exactly.
\begin{theorem}
\label{theorem:csc_quantile}
If $(X,Z)^\top \sim \CSC_{2}(\gamma)$, then for any $a \in \mathbb{R}$ the median of $Y_a = X+aZ$ is 
\begin{equation}
\label{eq:csc_median}
\eta = \frac{\gamma}{\sqrt{1+\gamma^2}}.
\end{equation}
\begin{proof}
From the linear forms of the \ST \ distribution (see
Appendix~\ref{subsec:lf_st}) $$Y_a \sim \SC_{1}\left(0,1+a^2,\gamma_{a}\right),$$
where $\gamma_{a}=\gamma/\sqrt{1+a^2(1+\gamma^2)}$. The median of $Y_a$ is then (see Section 2.3 in \citet{behboodian2006})
$$\frac{\sqrt{1+a^2}}{\sqrt{1+\gamma_{a}^2}}\gamma_{a} = \frac{\gamma}{\sqrt{1+\gamma^2}}. $$
\end{proof}
\end{theorem}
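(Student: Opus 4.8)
The plan is to reduce the bivariate claim to a one-dimensional computation by exploiting the closure of the skew-$t$ family under linear functionals. Since $(X,Z)^\top \sim \CSC_{2}(\gamma) = \ST_{2}(\vec{0}, I_{2}, \gamma\vec{e}_{1}, 1)$, the variable $Y_a = X + aZ$ is the linear functional $\vec{u}^\top (X,Z)^\top$ with $\vec{u} = (1,a)^\top$. The linear-transformation result for the \ST\ distribution recorded in Appendix~\ref{subsec:lf_st} then says that $Y_a$ is again a univariate skew-$t$ with one degree of freedom, i.e.\ a univariate skew-Cauchy, with location $\vec{u}^\top\vec{0} = 0$, dispersion $\vec{u}^\top I_{2}\vec{u} = 1 + a^2$, and a slant parameter $\gamma_a$ inherited from $\gamma\vec{e}_{1}$. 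So it suffices to track $\gamma_a$ through the transformation and then to read off the median of a univariate skew-Cauchy.

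The key step is to compute $\gamma_a$ explicitly. Specialising the linear-forms formula to $\Omega = I_{2}$, $\boldsymbol{\gamma} = \gamma\vec{e}_{1}$ and $\vec{u} = (1,a)^\top$ gives
\begin{equation*}
  \gamma_a = \frac{\gamma}{\sqrt{1 + a^2(1+\gamma^2)}},
\end{equation*}
so that $Y_a \sim \SC_{1}(0, 1+a^2, \gamma_a)$. This is the only nontrivial bookkeeping in the argument: once the slant parameter of a general affine image of a canonical \ST\ vector has been written down in the appendix, the present instance is just a substitution.

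Finally I would invoke the formula for the median of a univariate skew-Cauchy (Section~2.3 of \citet{behboodian2006}): if $W \sim \SC_{1}(\xi, \omega^2, \lambda)$ then $\mathrm{median}(W) = \xi + \omega\lambda/\sqrt{1+\lambda^2}$. Applied to $Y_a$ this gives $\sqrt{1+a^2}\,\gamma_a/\sqrt{1+\gamma_a^2}$, and substituting the expression for $\gamma_a$ together with the identity $1+\gamma_a^2 = (1+\gamma^2)(1+a^2)/(1 + a^2(1+\gamma^2))$ collapses this to $\gamma/\sqrt{1+\gamma^2}$, independently of $a$, as claimed. An alternative route that avoids quoting \citet{behboodian2006} is to note that the median of $\vec{u}^\top\vec{X}$ is exactly $q_{1/2}(\vec{u})$, and the proof of Theorem~\ref{theorem:sc_contours} already shows $q_{1-\alpha}(\vec{u}) = u_{1}s(\alpha) + t(\alpha)$; evaluating at $\alpha = 1/2$, where $s(1/2) = \gamma/\sqrt{1+\gamma^2}$ and $t(1/2) = 0$, with $\vec{u} = (1,a)^\top$ yields $\eta = \gamma/\sqrt{1+\gamma^2}$ at once. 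The main obstacle, such as it is, lies entirely in the middle paragraph — correctly transcribing how the slant parameter transforms under a linear functional — while the rest is a short algebraic simplification.
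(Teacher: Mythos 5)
Your argument is correct and is essentially the paper's own proof: both reduce $Y_a$ to $\SC_{1}(0,1+a^2,\gamma_a)$ with $\gamma_a=\gamma/\sqrt{1+a^2(1+\gamma^2)}$ via the linear-forms result in Appendix~\ref{subsec:lf_st} and then apply the \citet{behboodian2006} median formula, with your identity for $1+\gamma_a^2$ just making the paper's final cancellation explicit. The alternative route through Theorem~\ref{theorem:sc_contours} at $\alpha=1/2$ is a nice observation but does not change the substance.
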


\begin{corollary}
\label{corollary:scrad-part1}
If $\vec{X} \sim \CSC_{d}(\gamma)$, then $\vec{X}$ is angularly symmetric about  $\boldsymbol{\eta}^\top=(\eta,0\ldots,0)$ where $\eta$ is given by \eqref{eq:csc_median}.
\end{corollary}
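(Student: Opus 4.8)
The plan is to show that the half-space depth of $\vec X\sim\CSC_d(\gamma)$ at $\boldsymbol\eta=(\eta,0,\dots,0)^\top$ equals $1/2$ and then invoke the characterisation of \citet{dutta2011} recalled in Section~\ref{subsec:hd_median}: $\HD_{\vec X}(\boldsymbol\eta)=1/2$ if and only if $\boldsymbol\eta$ is the centre of angular symmetry. Because $\vec X$ has a strictly positive density, every hyperplane is $P_{\vec X}$-null, so the identity $\HD_{\vec X}(\boldsymbol\eta)=\inf_{\|\vec u\|=1}P_{\vec X}(H_{\boldsymbol\eta,\vec u})=1/2$ is equivalent to the claim that $\vec u^\top\boldsymbol\eta=u_1\eta$ is a median of the univariate projection $\vec u^\top\vec X$ for every unit vector $\vec u$.

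To identify that median I would use the orthogonal invariance of the canonical form. Reading the $\CSC_d(\gamma)$ density off the \ST\ density formula with $\boldsymbol\xi=\vec0$, $\Omega=I_d$ and $\boldsymbol\gamma=\gamma\vec e_1$ shows that it depends on $\vec x$ only through $x_1$ and $\|\vec x\|$, hence is invariant under any orthogonal transformation of block form $\diag(1,P')$ with $P'$ orthogonal on $\mathbb R^{d-1}$. Writing $\vec u_{-1}=(u_2,\dots,u_d)$ and choosing such a transformation that rotates $\vec u_{-1}$ onto $(\|\vec u_{-1}\|,0,\dots,0)$ gives $\vec u^\top\vec X\overset{d}{=}u_1X_1+aX_2$ with $a=\|\vec u_{-1}\|$. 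When $u_1=0$ this projection is $aX_2$, symmetric about $0$ since $X_2\overset{d}{=}-X_2$, so its median is $0=\vec u^\top\boldsymbol\eta$; when $u_1\neq0$, write $u_1X_1+aX_2=u_1\bigl(X_1+(a/u_1)X_2\bigr)$ and apply Theorem~\ref{theorem:csc_quantile}, which says that $X_1+cX_2$ has median $\eta$ for every $c\in\mathbb R$, together with the fact that scaling a continuous random variable by a nonzero constant scales its median by the same constant; this yields median $u_1\eta=\vec u^\top\boldsymbol\eta$. Combining the cases, $P_{\vec X}(H_{\boldsymbol\eta,\vec u})=1/2$ for all $\|\vec u\|=1$, so $\HD_{\vec X}(\boldsymbol\eta)=1/2$ and the corollary follows.

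The only delicate point I anticipate is the bivariate reduction: one must check that the orthogonal invariance of $\CSC_d(\gamma)$ in the last $d-1$ coordinates is genuine (cleanest to see from the closed form of the density) and that the scaling step handles $u_1<0$ as well as $u_1>0$; everything after the median has been pinned to $u_1\eta$ is immediate. If one prefers not to cite \citet{dutta2011}, the same computation shows directly that every closed half-space through $\boldsymbol\eta$ has probability $1/2$, which for an absolutely continuous law is equivalent to the symmetry~\eqref{eq:angular_sym} of $(\vec X-\boldsymbol\eta)/\|\vec X-\boldsymbol\eta\|$; alternatively one could stay in arbitrary dimension throughout by appealing to the \ST\ linear-form result of Appendix~\ref{subsec:lf_st} to see that $\vec u^\top\vec X$ is univariate skew-Cauchy, then reading its median off the quantile formula of \citet{behboodian2006}.
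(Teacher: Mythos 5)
Your proposal is correct and follows essentially the same route as the paper: both reduce $\vec{u}^\top\vec{X}$ to the bivariate projection $X_1+cX_2$ of Theorem~\ref{theorem:csc_quantile}, verify that every half-space through $\boldsymbol{\eta}$ has probability $1/2$, and conclude via the characterisation of angular symmetry through $\HD_{\vec X}(\boldsymbol\eta)=1/2$. Your treatment is marginally more careful than the paper's in handling $u_1<0$ explicitly via the median-scaling argument (the paper only treats $u_1>0$), but this is a refinement of the same proof, not a different one.
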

\begin{proof}
It follows from Theorem~\ref{theorem:csc_quantile} that the component-wise median of $\vec{X}$ is $\boldsymbol{\eta}^\top=(\eta,0\ldots,0)$ where $\eta$ is given by \eqref{eq:csc_median}. For any directional vector $\boldsymbol{u} \in \mathbb{R}^d$, we can write
\begin{equation}
\label{eq:pr1}
P_{\vec{X}}\left(H_{\boldsymbol{\eta},\vec{u}}\right) = P\left(u_{1}X_{1} + \sum_{i=2}^{d}u_{i}X_{i} \leq u_{1}\eta\right). 
\end{equation}
If $u_{1} = 0$ the above equation equals $1/2$; let $u_{1} > 0$ and
set $a
= \sqrt{1-u_{1}^2}/u_{1}$. Using Appendix~\ref{subsec:lf_st} it may be
easily shown that $X_{1} + \sum_{i=2}^{d}u_{i}X_{i}/u_{1} \overset{d}{=} Y_a$, where $Y_a$ is defined in Theorem \ref{theorem:csc_quantile}.
It follows that \eqref{eq:pr1} can be expressed as
\begin{eqnarray*}
P\left(X_{1} + \frac{1}{u_{1}}\sum_{i=2}^{d}u_{i}X_{i} \leq
  \eta\right) = P(Y_a \leq \eta) =1/2.
\end{eqnarray*}
Since $u_1$ is arbitrary we conclude that
$\HD_{\vec{X}}(\boldsymbol{\eta})=1/2$ and hence that
$\boldsymbol{\eta}$ is the half-space median, as well as the center of angular symmetry of $\vec{X}$.
\end{proof}

\begin{corollary}
\label{corollary:scrad}
If $\vec{X} \sim \SC_{d}(\boldsymbol{\xi},\Omega,\boldsymbol{\gamma})$, then $\vec{X}$ is angularly symmetric about 
$\boldsymbol{\xi} +\omega\boldsymbol{\delta}$ where
$\omega=\diag(\omega_{11},\ldots,\omega_{dd})^{1/2}$, $\overline{\Omega}=\omega^{-1}\Omega\omega^{-1}$ and
\begin{displaymath}
  \boldsymbol{\delta} = \frac{\overline{\Omega}\boldsymbol{\gamma}}{\sqrt{1+\boldsymbol{\gamma}^\top\overline{\Omega}\boldsymbol{\gamma}}}.
\end{displaymath}
\end{corollary}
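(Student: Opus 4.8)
The plan is to transport Corollary~\ref{corollary:scrad-part1} along the canonical-form affine map of Theorem~\ref{theorem:canon_st}, using the fact that angular symmetry is affine equivariant. First I would record this equivariance explicitly. By \citet*{dutta2011}, a point $\boldsymbol{\eta}$ is the centre of angular symmetry of $\vec{X}$ if and only if $\HD_{\vec{X}}(\boldsymbol{\eta}) = 1/2$; since $\HD$ is affine invariant, $\HD_{A\vec{X}+\vec{b}}(A\boldsymbol{\eta}+\vec{b}) = \HD_{\vec{X}}(\boldsymbol{\eta}) = 1/2$ for any non-singular $A$ and any $\vec{b}$, so $A\boldsymbol{\eta}+\vec{b}$ is the centre of angular symmetry of $A\vec{X}+\vec{b}$. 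This is the only conceptual ingredient; everything after it is matrix bookkeeping.

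Next I would invert the canonical-form map. With $\gamma_{*} = (\boldsymbol{\gamma}^{\top}\overline{\Omega}\boldsymbol{\gamma})^{1/2}$ and $\vec{X}^{*} = P^{\top}B^{-1}(\vec{X}-\boldsymbol{\xi}) \sim \CSC_{d}(\gamma_{*})$ as in Theorem~\ref{theorem:canon_st}, orthonormality of $P$ gives $\vec{X} = \boldsymbol{\xi} + BP\vec{X}^{*}$. By Corollary~\ref{corollary:scrad-part1}, $\vec{X}^{*}$ is angularly symmetric about $\eta_{*}\vec{e}_{1}$ with $\eta_{*} = \gamma_{*}/\sqrt{1+\gamma_{*}^{2}}$. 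Applying the equivariance of the first step with $A = BP$, $\vec{b} = \boldsymbol{\xi}$, I conclude that $\vec{X}$ is angularly symmetric about $\boldsymbol{\xi} + \eta_{*}\,BP\vec{e}_{1}$.

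It then remains to simplify $BP\vec{e}_{1}$. The first column of $P$ is $\gamma_{*}^{-1}B^{\top}\omega^{-1}\boldsymbol{\gamma}$, so $BP\vec{e}_{1} = \gamma_{*}^{-1}BB^{\top}\omega^{-1}\boldsymbol{\gamma} = \gamma_{*}^{-1}\Omega\omega^{-1}\boldsymbol{\gamma} = \gamma_{*}^{-1}\omega\overline{\Omega}\boldsymbol{\gamma}$, using $\Omega = BB^{\top}$ and $\omega\overline{\Omega} = \omega(\omega^{-1}\Omega\omega^{-1}) = \Omega\omega^{-1}$. Hence the centre equals $\boldsymbol{\xi} + (\eta_{*}/\gamma_{*})\,\omega\overline{\Omega}\boldsymbol{\gamma} = \boldsymbol{\xi} + \omega\overline{\Omega}\boldsymbol{\gamma}/\sqrt{1+\gamma_{*}^{2}}$, and substituting $\gamma_{*}^{2} = \boldsymbol{\gamma}^{\top}\overline{\Omega}\boldsymbol{\gamma}$ yields $\boldsymbol{\xi} + \omega\boldsymbol{\delta}$ with $\boldsymbol{\delta} = \overline{\Omega}\boldsymbol{\gamma}/\sqrt{1+\boldsymbol{\gamma}^{\top}\overline{\Omega}\boldsymbol{\gamma}}$, as claimed. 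I do not anticipate any genuine obstacle; the only point I would be careful to state rather than take for granted is the affine equivariance of angular symmetry, since it is precisely what licenses pushing the canonical-form result back to the general \SC\ distribution.
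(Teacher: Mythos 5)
Your proposal is correct and follows essentially the same route as the paper's own proof: transport Corollary~\ref{corollary:scrad-part1} back through the inverse of the canonical-form map of Theorem~\ref{theorem:canon_st}, so that the centre is $\boldsymbol{\xi}+BP\boldsymbol{\beta}_{\vec{X}^*}$. You merely make explicit two steps the paper leaves implicit --- the affine equivariance of the centre of angular symmetry and the computation $BP\vec{e}_{1}=\gamma_{*}^{-1}\omega\overline{\Omega}\boldsymbol{\gamma}$ --- and both are carried out correctly.
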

\begin{proof} 
Let $\vec{X}^* = c(\vec{X}) =
P^\top B^{-1}(\vec{X}-\boldsymbol{\xi})$ where $P$ and $B$ are defined
in Theorem \ref{theorem:canon_st}. Since $\vec{X}^*$ is
angularly symmetric about its median
$\boldsymbol{\beta}_{\vec{X}^*}=(\eta,0,\ldots,0)$ where $\eta$ is given in~\eqref{eq:csc_median}, it follows easily
that $\vec{X}$ is angularly symmetric about its median
$\boldsymbol{\beta}_{\vec{X}}=
\boldsymbol{\xi}+BP\boldsymbol{\beta}_{\vec{X}^*}=\boldsymbol{\xi} + \omega\boldsymbol{\delta}$.
\end{proof}

\begin{figure}
\begin{center}
\includegraphics[scale=0.9]{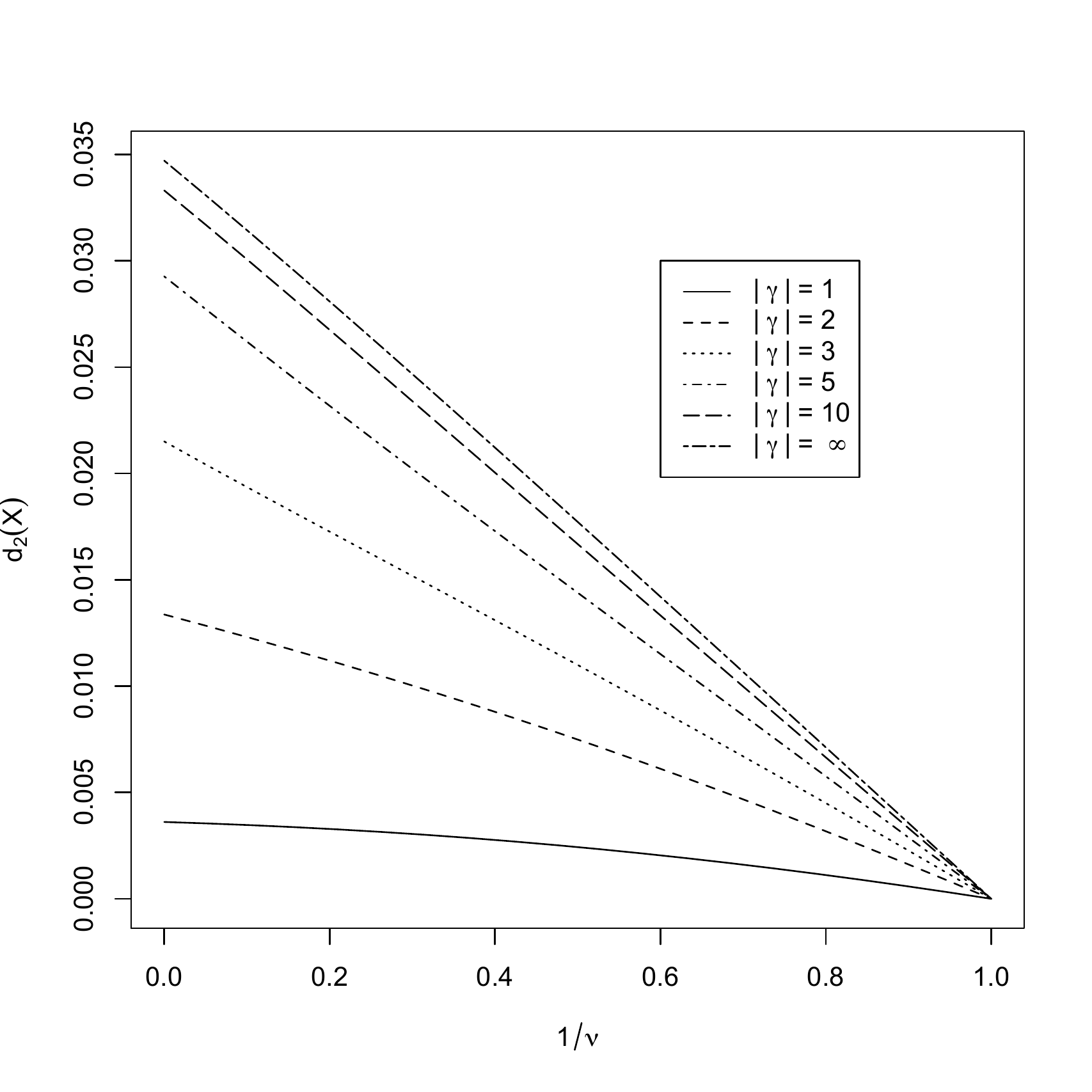}
\caption{Deviation from angular symmetry for the \ST \ family. \label{fig:d2_ST}}
\end{center}
\end{figure}

\begin{figure}
\begin{center}
\includegraphics[scale=0.9]{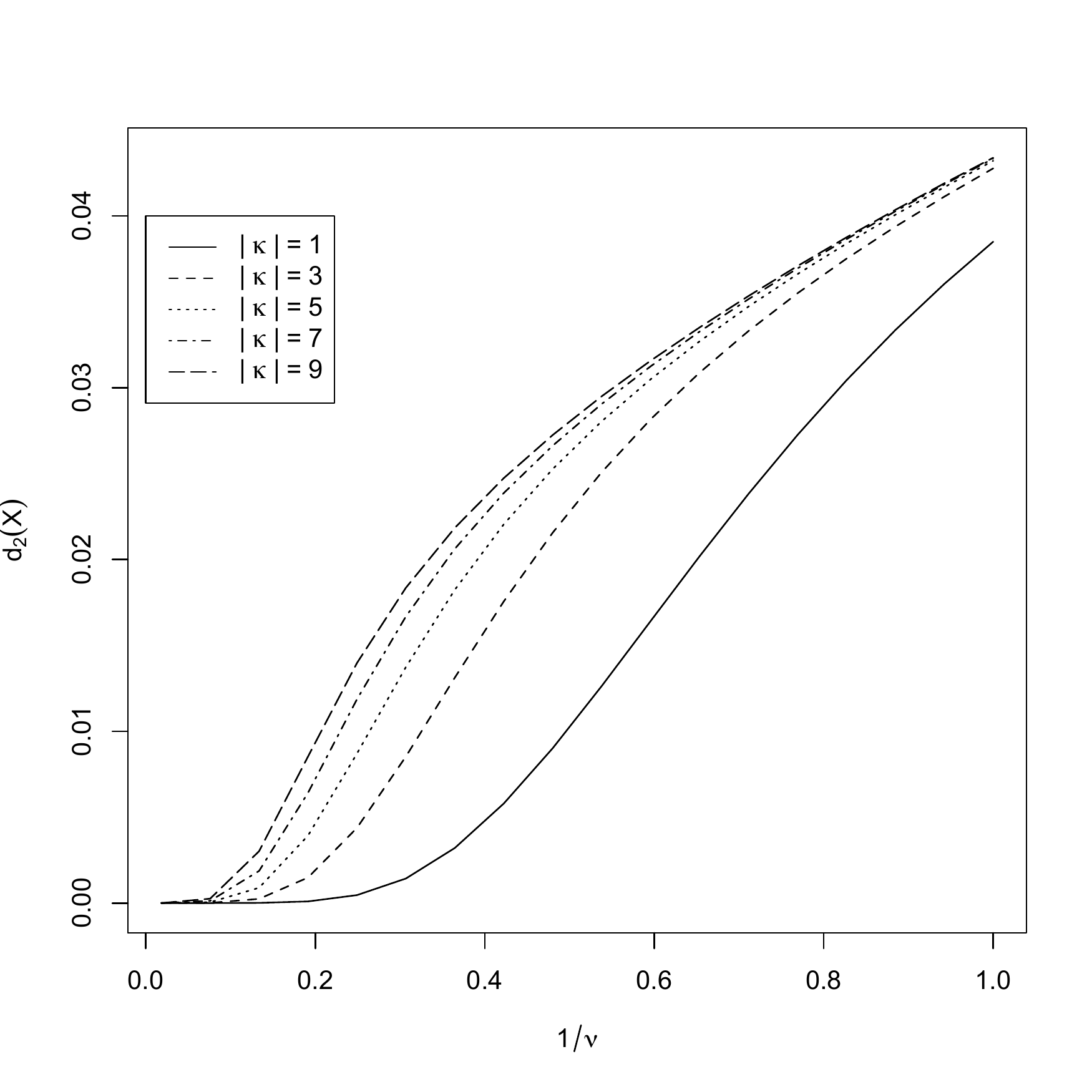}
\caption{Deviation from angular symmetry for the \St \ family. \label{fig:d2_St}}
\end{center}
\end{figure}

\begin{figure}
\begin{center}
\includegraphics[scale=0.9]{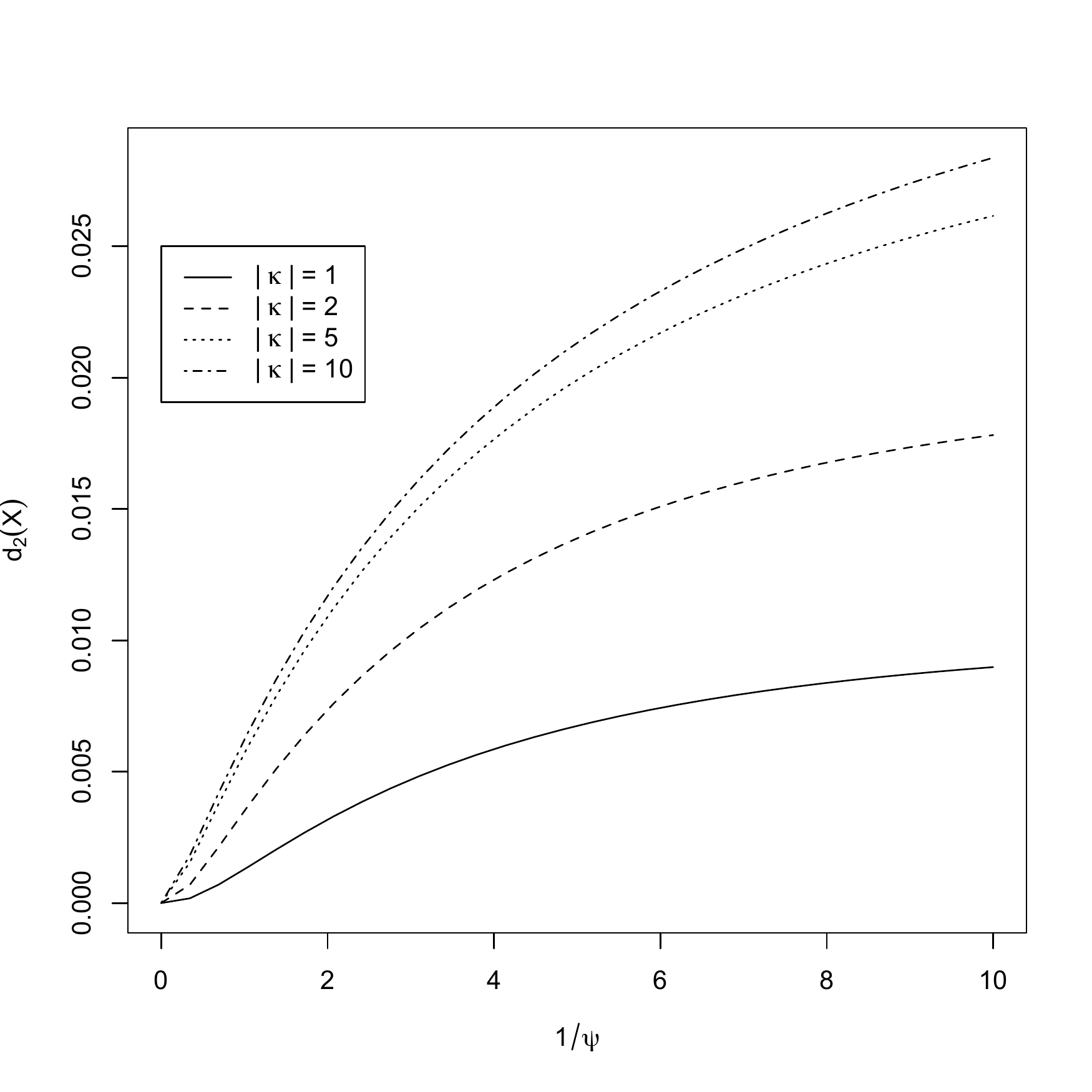}
\caption{Deviation from angular symmetry for the \NIG \ family. \label{fig:d2_nig}}
\end{center}
\end{figure}

Figure \ref{fig:d2_St} is an analogous plot to Figure~\ref{fig:d2_ST}
for the case of the $\CSt_{2}$ distribution. In this case the degrees of freedom play an opposite role with respect to the $\CST_{2}$ case: $d_{2} \rightarrow 0$ as $\nu \rightarrow \infty$. This can be explained as follows. Using the stochastic representation in \eqref{eq:gh}, as $\nu \rightarrow \infty$, $W$ tends to a degenerate distribution which is constant in 1. From \eqref{eq:gh}, we have that $\vec{X}$ tends in distribution to $N_{d}(\boldsymbol{\mu}+\boldsymbol{\kappa}, \Sigma)$ which is elliptically (hence also angularly) symmetric. \par
A similar argument applies to the \NIG \ distribution. As
$\psi \rightarrow \infty$, $W$ tends in distribution to the constant 1
and the same conclusions are drawn as in the previous case. This is
reflected in Figure \ref{fig:d2_nig}, where $d_{2} \rightarrow 0$ for
decreasing values of $1/\psi$. Hence large values of $\psi$ results in a better ellipsoidal approximation.

\subsection{Probability of misclassification using ellipsoidal approximations}
\label{subsec:prob_misclass} 
Let $\vec{X}$ denote a random variable in $\mathbb{R}^d$, belonging either to the \ST \ or \GH \ family. If $\tilde{Q}_{\alpha}$ is the approximating ellipsoid of $Q_{\alpha}$ then the misclassification set is given by
\begin{eqnarray*}
M = M_{1} \cup M_{2},
\end{eqnarray*}
where $M_{1} = \{\vec{x} : \vec{x} \in Q_{\alpha}\text{ and }\vec{x} \notin \tilde{Q}_{\alpha}\}$ is the set of false negatives and $M_{2} = \{\vec{x} : \vec{x} \notin Q_{\alpha}\text{ and }\vec{x} \in \tilde{Q}_{\alpha}\}$ is the set of false positives. The probability of misclassification is
\begin{eqnarray}
\label{eq:prob_misclassification}
P(\vec{X} \in M) = \int_{M} f_{\vec{X}}(\vec{x}) \: d \vec{\vec{x}},
\end{eqnarray}
where $f_{\vec{X}}$ is the density function of $\vec{X}$. The above
integral is intractable, in general, so we use numerical quadrature to approximate the integral as a sum over a fine grid. Letting $\tilde{\vec{x}}_{1},\ldots,\tilde{\vec{x}}_{n} \in M$ be a regular grid covering $M$ with cell area $\Delta$, we have
\begin{eqnarray*}
\int_{M} f_{\vec{X}}(\vec{x}) \: d \vec{\vec{x}} \approx \Delta\sum_{i=1}^n f_{\vec{X}}(\tilde{\vec{x}}_{i}).
\end{eqnarray*}   

For small values of $\alpha$ (for example $\alpha = 0.05$) we find
that, in general, the misclassification probability is relatively low
with values exceeding $0.1$ only in very extreme cases with very
strong asymmetry. Additionally, we find that the main component in the
misclassification probability is generally given by $P(\vec{X} \in M_{1})$ while $P(\vec{X} \in M_{2})$ is often negligible. The next example illustrate these observations.
\begin{example}
\label{example:miscl}
Let $\vec{X} \sim \CNIG_{2}(\kappa,1/10)$, $\vec{Y} \sim \CSN_{2}(\gamma)$ and $\vec{Z} \sim \CSt_{2}(\kappa,5)$. In this example, we consider the approximation of $Q_{0.05}$ with the ellipse $\tilde{Q}_{0.05}$.
In Tables \ref{tab:cnig_miscl}-\ref{tab:cst_miscl} the probability
mass of the sets $M_{1}$ and $M_{2}$, and the index $d_{2}$ are
reported for each of the three random random vectors while the
skewness parameters $\gamma$ and $\kappa$ are allowed to vary. In all three cases, the misclassification probability increases for increasing values of the skewness parameter. The random variable $\vec{X}$ appears to have the highest misclassification probability of about $0.095$ for $\kappa = 30$, a case of extremely high asymmetry.
\begin{table}[ht]
\centering
\caption{Probability of misclassification for $\vec{X} \sim \CNIG_{2}(\kappa,1/10)$. \label{tab:cnig_miscl}}
\begin{tabular}{rccc}
  \hline
  $\kappa$ & $P(\vec{X} \in M_{1})$ & $P(\vec{X} \in M_{2})$ & $d_{2}(\vec{X})$ \\ 
  \hline
 1 & 0.008 & 0.000 & 0.009 \\
 2 & 0.028 & 0.000 & 0.018 \\ 
  5 & 0.069 & 0.000 & 0.026 \\
  15 & 0.089 & 0.000 & 0.029 \\ 
  30 & 0.095 & 0.000 & 0.029 \\
   \hline
\end{tabular}
\caption{Probability of misclassification for $\vec{Y} \sim \CSN_{2}(\gamma)$. \label{tab:csn_miscl}}
\begin{tabular}{rccc}
  \hline
  $\gamma$ & $P(\vec{Y} \in M_{1})$ & $P(\vec{Y} \in M_{2})$ & $d_{2}(\vec{Y})$ \\ 
  \hline
1 & 0.002 & 0.002 & 0.004 \\ 
  2 & 0.008 & 0.005 & 0.013 \\ 
  5 & 0.025 & 0.009 & 0.029 \\ 
  10 & 0.035 & 0.010 & 0.033 \\ 
  50 & 0.036 & 0.010 & 0.035 \\ 
   \hline
\end{tabular}
\caption{Probability of misclassification for $\vec{Z} \sim \CSt_{2}(\kappa,5)$. \label{tab:cst_miscl}}
\begin{tabular}{rccc}
  \hline
  $\kappa$ & $P(\vec{Z} \in M_{1})$ & $P(\vec{Z} \in M_{2})$ & $d_{2}(\vec{Z})$ \\ 
  \hline
1 & 0.001 & 0.000 & 0.005 \\
  3 & 0.002 & 0.001 & 0.013 \\  
  5 & 0.003 & 0.001 & 0.015 \\ 
  10 & 0.003 & 0.001 & 0.017 \\ 
  20 & 0.003 & 0.001 & 0.017 \\ 
   \hline
\end{tabular}
\end{table}
Only in the case of $\vec{Y}$ is the probability mass on $M_{2}$ not
negligible, with a maximum value of about $0.010$ . The random
variable $\vec{Z}$ has the lowest misclassification probability even
for very high values of $\kappa$, reaching a maximum value of about
$0.004$. Note that the measure of skewness $d_{2}$ is also a good
indicator of the quality of the ellipsoidal approximation, although it is not comparable between different families of distributions.
\end{example}
\begin{example}
\label{example:motivating_data}
We now consider the yield data for 3-year and 10-year government bonds, plotted in Figure \ref{fig:motivating_data}. We fitted a bivariate \NIG \ and obtained the following maximum likelihood estimates 
\begin{eqnarray*}
\hat{\boldsymbol{\mu}}^\top &=& (-0.017, -0.016), \\
\hat{\Sigma} &=& \left(
\begin{matrix}
8.860\times 10^{-6} & -5.350\times 10^{-6} \\
-5.350e\times 10^{-6}  & 2.844\times 10^{-5} \\
\end{matrix}\right), \\
\boldsymbol{\kappa}^\top &=& (0.026, 0.020), \\
\hat{\psi} &=& 5.527. \\
\end{eqnarray*}
The resulting index of skewness is $\hat{d}_2=0.001$ which indicates a low level of skewness (as deviation from angular symmetry) of the estimated \NIG \ distribution. Indeed, the misclassification probability of the ellipsoidal approximation is extremely low as is also  evident in Figure \ref{fig:motivating_data}.
\end{example}
\section{Discussion}
\label{sec:discussion}
In this paper we have shown how multivariate scenario sets based on
\HD \ and \ED \ can be efficiently computed in the case of the \ST \
and \GH \ distributions. Computation can be simplified by making use
of a canonical form representation where only one component is
asymmetric at most. Additionally, in the case of multivariate sets
based on \HD , ellipsoids represents a good approximation with a
probability of misclassification exceeding $0.1$ only in cases of
extremely high skewness. We have demonstrated that the quality of the
ellipsoidal approximations can be explained in terms of the closeness
of the \ST \ and \GH \ distributions to angular symmetry. We proposed
a measure of the departure from angular symmetry which is easy to
compute and concordant with the misclassification probability. \par

In our examples of ellipsoidal approximations, we only considered
bivariate cases. However, the affine invariance property and the
availability of a canonical form, with all the asymmetry absorbed in
the marginal distribution of the first component, means that this is
sufficient to gain an understanding of the quality of the
approximation.
 Indeed, the skewness index
$d_{2}$, introduced in Section \ref{subsec:relation_rs_eds}, and the curves shown in
Figures \ref{fig:d2_ST}, \ref{fig:d2_St} and \ref{fig:d2_nig}, are
independent of the dimension of the underlying vector. Although the
probability of misclassification will change with dimension, the
geometry of these distributions means that the changes will remain
modest.\par

 The near-elliptical shape of the depth sets for \ST \ and \GH \
 distributions, and the availability of a simple method of
 constructing an elliptical approximation, makes these distributions
 attractive for modelling the behaviour of financial risk factors in
 the stress testing applications metioned in the Introduction.\par

Although we only considered the \ST \ and \GH \ distributions, we
believe that some of the results can be easily extended to other
multivariate skewed distributions which are closed under affine
transformations and which admit a canonical form representation. An
example is given by skew scale mixtures of normal variates
\citep{branco2001} of which the multivariate skew-slash distribution
\citep{wang2006} is one of the many special cases. \par

Finally, some of the presented material might also be useful to address the related problem of defining multivariate measures of skewed distributions that do not depend on the existence of the moments of the distribution. For example, Theorem \ref{theorem:sc_contours} and Algorithm \ref{algorithm:approx_ellipse_GH} also allow for an exact and approximate computation, respectively, of a multivariate measure of kurtosis proposed by \citet*{wang2005}, who only considered elliptical distributions as parametric examples. Additionally, Corollary \ref{corollary:scrad} provides a measure of location for the \SC \ distribution, namely its center of angular symmetry, that should be preferred to the location parameter $\boldsymbol \xi$ which usually lies in regions far from the ``center'' of the distribution. It would also be interesting to investigate whether the half-space median of the \ST \ and \GH \ distributions is unique or whether a specific point of maximum depth can be identified by making use of the canonical form.

\section*{Acknowledgements}
Some of the work was done while Emanuele Giorgi was a student at the Department of Statistical Sciences in the University of Padua, Italy, under the supervision of Adelchi Azzalini.
\bibliographystyle{biometrika}
\bibliography{biblio,MFE-McNeil}
\appendix
\section{Linear forms}
\label{sec.linear_forms}
Let $A$ be a non-singular $k \times d$ matrix with rank $k \leq d$ and
let $\vec{b} \in \mathbb{R}^k$.
\subsection{Skew-$t$ distribution}
\label{subsec:lf_st}
If $\vec{X} \sim \ST_{d}(\boldsymbol\xi,\Omega, \boldsymbol \gamma, \nu)$, then $\vec{Y} = A\vec{X}+\vec{b} \sim \ST_{k}(\boldsymbol\xi_{\vec{Y}},\Omega_{\vec{Y}}, \boldsymbol{\gamma}_{\vec{Y}},\nu)$, where
\begin{eqnarray*}
\boldsymbol \xi_{\vec{Y}} &=& A\boldsymbol \xi + \vec{b}, \\
\Omega_{\vec{Y}} &=& A\Omega A^\top, \\
\boldsymbol \gamma_{\vec{Y}} &=& \frac{\omega_{\vec{Y}}\Omega_{\vec{Y}}^{-1}C^\top\boldsymbol \gamma}
{\sqrt{1+\boldsymbol \gamma^\top (\overline{\Omega}-C\Omega_{\vec{Y}}^{-1}C^\top) \boldsymbol \gamma}},
\end{eqnarray*}
with $C=\omega^{-1}\Omega A^\top$. \par
If $\vec{X} \sim \CST_{d}(\gamma,\nu)$ and $\vec{u}$ is a directional vector in $\mathbb{R}^d$, then $\vec{u}^\top\vec{X} \sim \CST_{1}(\gamma_*,\nu)$, where 
\begin{equation}
\label{eq:gammastar}
\gamma_* = \frac{u_{1}\gamma}{\sqrt{1+\gamma^2(1-u_{1}^2)}}, u_{1} \in (-1,1).
\end{equation}

\subsection{Generalized hyperbolic distribution}\label{app:gener-hyperb-distr}
If $\vec{X} \sim \GH_{d}(\boldsymbol\mu,\Sigma, \boldsymbol \kappa, \lambda, \chi, \psi)$, then $\vec{Y} = A\vec{X}+\vec{b} \sim \ST_{k}(\boldsymbol\mu_{\vec{Y}},\Sigma_{\vec{Y}}, \boldsymbol{\kappa}_{\vec{Y}},\lambda, \chi, \psi)$, where
\begin{eqnarray*}
\boldsymbol \mu_{\vec{Y}} &=& A\boldsymbol \mu + \vec{b}, \\
\Sigma_{\vec{Y}} &=& A\Sigma A^\top, \\
\boldsymbol \kappa_{\vec{Y}} &=& A\boldsymbol \kappa.
\end{eqnarray*}

If $\vec{X} \sim \CGH_{d}(\kappa, \lambda, \chi, \psi)$ and $\vec{u}$ is a directional vector in $\mathbb{R}^d$, then $\vec{u}^\top\vec{X} \sim \CGH_{1}(u_{1}\kappa,\lambda,\chi,\psi)$.

\section{Generalized Inverse Gaussian distritbuion}
\label{subsec:gig}
If $X$ has a generalized inverse Gaussian (\GIG ) distribution, written as $X \sim GIG(\lambda,\chi,\psi)$, then its density is
\begin{equation}
\label{eq:gig}
f_{\GIG}(x) = \frac{\chi^{-\lambda}(\sqrt{\chi\psi})^\lambda}{2K_{\lambda}(\sqrt{\chi\psi})}x^{\lambda-1}\exp\left\{-\frac{1}{2}(\chi x^{-1}+\psi x)\right\}, x >0,
\end{equation}
where $K_{\lambda}(\cdot)$ is the modified Bessel function of the third kind of order $\lambda$ and with the following constraints on the other parameters: $\chi > 0$, $\psi \geq 0$ if $\lambda < 0$; $\chi > 0$, $\psi > 0$ if $\lambda = 0$; $\chi \geq 0$, $\psi > 0$ if $\lambda > 0$.
\end{document}